\numberwithin{equation}{section}
\newtheorem{thm}{Theorem}[section]
\newtheorem{lemma}[thm]{Lemma}
\newtheorem{pro}[thm]{Proposition}
\newtheorem{corollary}[thm]{Corollary}
\newtheorem{definition}{Definition}
\newtheorem{rem}[thm]{Remark}
\newtheorem{alg}{Algorithm}
\newtheorem*{keywords}{Keywords}
\newcommand{\be}{\begin{equation}}
\newcommand{\ee}{\end{equation}}
\newcommand{\bea}{\begin{eqnarray*}}
\newcommand{\eea}{\end{eqnarray*}}
\newcommand{\dom}{\mathrm{dom}}
\newcommand{\mR}{\mathbb{R}}
\newcommand{\mN}{\mathbb{N}}
\newcommand{\mcH}{\mathcal{H}}
\newcommand{\mcX}{\mathcal{X}}
\newcommand{\prox}{\operatorname{prox}}
\DeclareMathOperator*{\argmin}{argmin}
\newcommand{\la}{\langle}
\newcommand{\ra}{\rangle}
\newcommand{\eref}[1] {(\ref{#1})}
\begin{document}

\title{Modified Fej\'{e}r  Sequences and  Applications\footnote{Email address: jhlin5@hotmail.com; lrosasco@mit.edu; silvia.villa@iit.it; mazhou@cityu.edu.hk. This material is based upon work supported by the Center for Brains, Minds and Machines (CBMM), funded by NSF STC award CCF-1231216.
The work by D. X. Zhou described in this paper is supported by a grant from the NSFC/RGC Joint Research Scheme [RGC Project No. N\_CityU120/14 and NSFC Project No. 11461161006]. L. R. acknowledges the financial support of the Italian Ministry of Education, University and Research FIRB project RBFR12M3AC. S. V. is member of the Gruppo Nazionale per l'Analisi Matematica, la Probabilit\`a e le loro Applicazioni (GNAMPA) of the Istituto Nazionale di Alta Matematica (INdAM).}}
\renewcommand\Authfont{\footnotesize}
\renewcommand\Affilfont{\fontsize{9}{10.8}\itshape}

\author[$\dag$]{Junhong Lin}
\author[$\dag, \circ$]{Lorenzo Rosasco}
\author[$\dag$]{Silvia Villa}
\author[*]{Ding-Xuan Zhou}
\affil[$\dag$]{LCSL, Istituto Italiano di Tecnologia and Massachusetts Institute of Technology, Cambridge, MA 02139, USA}
\affil[$\circ$]{DIBRIS, Universit\'a degli Studi di Genova, Genova 16146, Italy}
\affil[*]{Department of Mathematics, City University of Hong Kong,
Kowloon, Hong Kong, China}
\maketitle \baselineskip 16pt
 \begin{abstract}
In this note, we propose and study  the notion of modified Fej\'{e}r sequences.
Within a  Hilbert space setting,  we show that it
provides a unifying framework to prove  convergence rates for objective function values of  several optimization algorithms.
In particular, our results apply to forward-backward splitting algorithm,  incremental subgradient proximal algorithm, and the Douglas-Rachford splitting method including and generalizing known results.
\end{abstract}

\begin{keywords}{\small \em Convex optimization, rate of convergence of objective function values, Fej\'er sequences, subgradient method, proximal splitting methods}
\end{keywords}
{\bf AMS Mathematics Subject Classification:} 65K05, 90C25, 90C52

\section{Introduction}

The notion of Fej\'{e}r monotonicity captures essential properties of the iterates
generated by a wide range of optimization methods and provides a common framework to analyze their convergence \cite{Combettes2001}.
Quasi-Fej\'er monotonicity is a relaxation of the above notion that allows for  an additional error term \cite{Ermoliev1968,Combettes2009}.
In this paper, we propose and study a novel, related notion to analyze the convergence of the objective function values,
in addition to that of the iterates.
More precisely, we  modify the notion of quasi-Fej\'er monotonicity,  by adding a term involving the objective function and
 say that  a sequence satisfying the new requirement is  modified Fej\'er monotone (modified Fej\'er for short).
In this paper, we show the usefulness of this new notion of monotonicity by
 deriving convergence rates for several  optimization algorithms in a unified  way.
Based on this approach, we not only recover known results, such as the sublinear convergence rate for the proximal
forward-backward splitting algorithm, but also derive new results. Interestingly, our results show that for projected subgradient, incremental subgradient proximal, and Douglas-Rachford algorithm,  considering the last iteration
  leads to  essentially the same convergence rate as  considering the best
iterate selection rule \cite{Shor1979,Polyak1987},  or ergodic means \cite{Boyd2003,Singer2009},  as typically done.

\section{Modified Fej\'{e}r Sequences}
Throughout this paper, we assume that $f: \mcH \to \left]-\infty,\infty\right]$ is a proper function on $\mcH$. Assume that the set
\[
\mcX=\{z\in\mcH\,|\,f(z)=\min_{x\in \mcH} f(x) \}
\]
is nonempty. We are interested in solving the following optimization problem
\be\label{GeneralOptimization}
f_* = \min_{x \in \mcH} f(x).
\ee
Given $x \in \mcH$ and a subset $S\subset \mcH$,  $d(x,S)$ denotes  the distance between $x$
and $S,$ i.e., $d(x,S) = \inf_{x' \in S} \|x - x'\|.$ $\mR_{+}$ is the set of all non-negative real numbers
and $\mN^*=\mN\setminus\{0\}$.
For any $S\subset \mcH$, we denote by ${\mathbf 1}_{\{\cdot\}} $ the characteristic function of $S$.

\noindent The following  definition introduces the key notion we propose in  this paper.

\begin{definition}\label{Def:GFejer}
 A sequence $\{x_t\}_{t \in \mN}\subset\mcH$ is modified Fej\'{e}r monotone with respect to the
 target function $f$ and the sequence $\{(\eta_t, \xi_t)\}_{t \in \mN}$ in $\mR_{+}^2 $, if 
  \be\label{GFejerSequn}
(\forall x \in \dom f)\qquad  \|x_{t+1} - x \|^2 \leq \|x_t - x\|^2  - \eta_t ( f(x_t) - f(x)) + \xi_t . 
  \ee
\end{definition}
\begin{rem}\ 
\begin{itemize}
\item[(i)] Choosing $x \in \mcX$ in \eref{GFejerSequn}, we get $$\eta_t f(x_t) \leq \xi_t + \eta_t f_* + \|x_t - x\|^2 < \infty.$$
This implies that $\{x_t\}_{t\in\mN}\subset \dom f$.
\item[(ii)] By setting $x = x_t$ in \eref{GFejerSequn}, a direct consequence  is that,
for all $t \in \mN$,
\be\label{Difference}
\|x_{t+1} - x_t \|^2 \leq \xi_t.
\ee
\item[(iii)] All the subsequent results hold if condition \eqref{GFejerSequn} is replaced by the following weaker condition
  \be
  \label{GFejerSequn2}
(\forall x\in\mcX\cup\{x_t\}_{t\in\mN})\quad  \|x_{t+1} - x \|^2 \leq \|x_t - x\|^2  - \eta_t ( f(x_t) - f(x)) + \xi_t . 
  \ee
However, in the proposed applications, condition \eqref{GFejerSequn} is always satisfied for every $x\in\dom f$.
\end{itemize}
\end{rem}
In the following remark we discuss the relation with classical Fej\'er sequences.

\begin{rem}[Comparison with quasi-Fej\'er sequences] \ \\
Let $C$ be a nonempty subset of $\mcH$. If $\sum_{t\in\mN} \xi_t<+\infty$,  Definition~\ref{Def:GFejer} applied to the function $f+\iota_C$,  implies that the sequence
$\{x_t\}_{t\in\mN}$ is quasi-Fej\'er monotone with respect to $C$ \cite{Ermoliev1968,Combettes2009}. Indeed, \eqref{GFejerSequn} implies
$$(\forall x \in C)\qquad \|x_{t+1} - x \|^2 \leq \|x_t - x\|^2  + \xi_t .$$
Note that, in the study of convergence properties of quasi-Fej\'er sequences corresponding to a minimization problem,
the property is considered with respect to the set of solutions $\mcX$, while here we will consider modified Fej\'er monotonicity
 for a general constraint set or the entire space $\mcH$.
\end{rem}

We next present  two main results to show how modified Fej\'er sequences are useful to study the convergence of optimization  algorithms. The first result shows that if a sequence is  modified Fej\'{e}r monotone, one can bound its corresponding excess function
values in terms of $\{(\eta_t, \xi_t)\}_{t \in \mN}$ explicitly.
\begin{thm}\label{Thm:ErrorDecomp}
Let $\{ x_t \}_{t \in \mN}\subset \mcH$ be a modified Fej\'{e}r sequence with respect to $f$
and $\{(\eta_t, \xi_t) \}_{t \in \mN}$ in  $\mR_{+}^2$. Let $\{\eta_t\}_{t \in \mN}$ be a non-increasing sequence. Let $T\in\mN$, $T> 1$. Then
\begin{align}
\label{GeneralBound}
\eta_T (f(x_{T}) - f_*)   \leq & {1 \over T} d(x_1, \mcX)^2 + \sum_{t=1}^{T-1}\frac{1}{T-t}\xi_t + \xi_T.
\end{align}
\end{thm}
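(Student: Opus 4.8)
The plan is to exploit the defining inequality \eqref{GFejerSequn} twice: once at a fixed minimizer $z \in \mcX$ to control distances to the solution set, and once (via Remark (ii)) to relate consecutive iterates. The key mechanism is a telescoping/averaging argument that converts the pointwise recursion into a bound on the \emph{last} iterate $f(x_T)$, rather than the more routine bound on the best iterate or on an ergodic mean. Concretely, for any $k$ with $1 \le k \le T$, summing \eqref{GFejerSequn} with $x = z \in \mcX$ from $t = k$ to $t = T$ and telescoping yields a bound of the form $\sum_{t=k}^{T} \eta_t (f(x_t) - f_*) \le \|x_k - z\|^2 + \sum_{t=k}^{T} \xi_t$; since $\{\eta_t\}$ is non-increasing, $\eta_T \sum_{t=k}^T (f(x_t) - f_*) \le \sum_{t=k}^T \eta_t(f(x_t)-f_*)$ provided the excess values $f(x_t) - f_*$ are nonnegative, which they are because $z$ is a global minimizer. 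This gives, for each $k$,
\[
(T - k + 1)\,\eta_T (f(x_T) - f_*) \le \eta_T \sum_{t=k}^{T}(f(x_t) - f_*) + \Big[\text{correction terms}\Big],
\]
where the correction terms must account for the fact that $f(x_T)$ need not be the smallest among $f(x_k), \dots, f(x_T)$.

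The heart of the argument is precisely handling that correction. Here I would use inequality \eqref{Difference} (equivalently, setting $x = x_t$ in \eqref{GFejerSequn}) together with a second application of \eqref{GFejerSequn} at $x = x_j$ for $j > i$ to bound $f(x_T) - f(x_t)$ in terms of the $\xi$'s: iterating $\|x_{i+1} - x\|^2 \le \|x_i - x\|^2 - \eta_i(f(x_i) - f(x)) + \xi_i$ and rearranging shows that the running averages of $f(x_t)$ cannot increase too fast. The cleanest route is to prove, for every pair $k \le T$, an inequality of the shape
\[
\eta_T (f(x_T) - f_*) \le \frac{1}{T-k+1}\Big( \|x_k - z\|^2 + \sum_{t=k}^{T} \xi_t \Big) + (\text{tail sum of }\xi_t\text{ with appropriate weights}),
\]
and then to average this over $k = 1, \dots, T$ (or choose the telescoping depth adaptively). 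Averaging over $k$ turns the $\frac{1}{T-k+1}\|x_k - z\|^2$ terms into something controlled by $\frac{1}{T}\|x_1 - z\|^2$ after again invoking \eqref{GFejerSequn} (with $x = z$) to bound $\|x_k - z\|^2 \le \|x_1 - z\|^2 + \sum_{t=1}^{k-1}\xi_t$, and it turns the weighted $\xi$-sums into the stated $\sum_{t=1}^{T-1}\frac{1}{T-t}\xi_t + \xi_T$ after interchanging the order of summation and bounding the resulting harmonic-type coefficients.

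The main obstacle, as I see it, is bookkeeping the coefficients so that the double sum $\sum_k \sum_t$ collapses to exactly $\sum_{t=1}^{T-1}\frac{1}{T-t}\xi_t + \xi_T$ with constant $1$ in front of the $\frac{1}{T}d(x_1,\mcX)^2$ term — in particular, making sure the "$f(x_T)$ is not the minimum" correction does not cost an extra logarithmic factor. This is delicate because a naive best-iterate argument gives $\frac{1}{T}\sum_{t=1}^T \xi_t$-type terms, whereas the stated bound has the heavier weights $\frac{1}{T-t}$ near $t = T$; getting these exactly right requires choosing the starting index $k$ of the telescoping sum carefully (likely $k$ ranging so that each $\xi_t$ is hit with total weight $\sum_{k \le t}\frac{1}{(T-k+1)^2}$ or similar, which must be bounded by $\frac{1}{T-t}$). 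Once the combinatorial identity for the weights is pinned down, the rest is substitution. Since $z \in \mcX$ was arbitrary, replacing $\|x_1 - z\|^2$ by $d(x_1,\mcX)^2 = \inf_{z \in \mcX}\|x_1 - z\|^2$ at the end completes the proof.
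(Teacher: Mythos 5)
You have correctly identified the two raw ingredients the paper also uses: summing \eqref{GFejerSequn} at a minimizer to control $\sum_{t}\eta_t(f(x_t)-f_*)$, and re-using \eqref{GFejerSequn} anchored at the iterates themselves to control how much the last value $f(x_T)$ can exceed earlier values. But the central step is missing, and the route you sketch for it does not go through as stated. Your plan is: for each starting index $k$, telescope at a fixed $z\in\mcX$ to get a bound with leading term $\frac{1}{T-k+1}\|x_k-z\|^2$, bound the per-$k$ correction $\eta_T\sum_{t=k}^T\bigl(f(x_T)-f(x_t)\bigr)$ by $\xi$-sums, and then average over $k$. The difficulty is that the available inequality anchored at $x_t$, namely $\sum_{j=t}^{T}\eta_j\bigl(f(x_j)-f(x_t)\bigr)\le\sum_{j=t}^{T}\xi_j$ (this is \eqref{GFejerSequnInterm} with $x=x_t$, $j=t$), only controls a weighted sum over $j$ in which the terms $f(x_j)-f(x_t)$ with $j<T$ may be arbitrarily negative; it does not yield a bound on $f(x_T)-f(x_t)$, hence not on your per-$k$ correction. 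Moreover, averaging the terms $\frac{1}{T-k+1}\|x_k-z\|^2$ over $k=1,\dots,T$ with any essentially uniform weights produces a harmonic sum and thus a $\frac{\log T}{T}d(x_1,\mcX)^2$ leading term, i.e.\ exactly the logarithmic loss you flag as the obstacle; you do not resolve it, and your fallback (``prove for every $k$'' the shifted version of the bound) is circular, since that shifted inequality is the theorem itself with initial point $x_k$.

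What closes the gap in the paper is a precise algebraic identity rather than an averaging-over-$k$ scheme: writing $u_t=\eta_t(f(x_t)-f(x))$, one has
\begin{equation*}
u_T=\frac1T\sum_{j=1}^T u_j+\sum_{k=1}^{T-1}\frac{1}{k(k+1)}\sum_{j=T-k+1}^{T}\bigl(u_j-u_{T-k}\bigr),
\end{equation*}
i.e.\ \eqref{BasicErrordecomposition}, obtained by telescoping the averages of the last $k$ terms. The first term is handled at $x=x_*$ with the single distance $\|x_1-x_*\|^2$ appearing with weight $1/T$; each difference block is handled by \eqref{GFejerSequnInterm} anchored at $x=x_{T-k}$ with starting index $j=T-k$, so the distance term there is $\|x_{T-k}-x_{T-k}\|^2=0$ and \emph{no} further distance-to-solution terms (hence no log on the leading term) are incurred; the weights $\frac{1}{k(k+1)}$ then collapse, after exchanging summations, to exactly $\sum_{t=1}^{T-1}\frac{1}{T-t}\xi_t+\xi_T$. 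Note also that passing from $u_t=\eta_t(f(x_t)-f(x))$ to the statement requires disposing of a third term involving $\frac1k\sum_{t=T-k+1}^T\eta_t-\eta_{T-k}$, which is where the non-increasing hypothesis on $\{\eta_t\}$ enters; your sketch uses monotonicity only through $\eta_t\ge\eta_T$ and never confronts this term. So the proposal is a reasonable strategy outline, but the decisive combinatorial identity and the change of anchor from $z\in\mcX$ to $x_{T-k}$ — which are precisely what prevent the logarithmic loss — are absent, and without them the argument does not yield \eqref{GeneralBound}.
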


\begin{proof}
Let   $\{u_j\}_{j\in\mN}$ be a sequence in $\mR$ and let $k\in\{1, \cdots, T-1\}$. We have
  \[
  \begin{split}
  & {1 \over k} \sum_{j=T-k+1}^{T} u_{j} - {1 \over k+1} \sum_{j=T-k}^T u_j \\
   =& {1 \over k(k+1)} \left\{ (k+1)\sum_{j=T-k+1}^{T} u_{j} - k \sum_{j=T-k}^T u_j \right\} \\
  =& {1 \over k(k+1)} \sum_{j=T-k+1}^{T} (u_{j} -  u_{T-k}) .
  \end{split}
  \]
Summing over $k=1, \cdots, T-1$, and rearranging terms, we get
\be\label{BasicErrordecomposition}
   u_{T}  = {1 \over T} \sum_{j=1}^T u_j + \sum_{k=1}^{T-1} {1 \over k(k+1)} \sum_{j=T-k+1}^{T} (u_{j} -  u_{T-k}) .
  \ee
For any $x \in \dom f$, choosing $(\forall t\in\mN)\; u_t=\eta_t(f(x_t)-f(x))$ and rearranging terms,
we have the following error decomposition \cite{Lin2015}:
\begin{eqnarray}
&& \eta_T (f(x_{T}) - f(x)) = {1 \over T} \sum_{t=1}^T \eta_t (f(x_t) - f(x)) \nonumber \\
&& + \sum_{k=1}^{T-1} \frac{1}{k(k+1)} \sum_{t=T-k+1}^{T} \eta_t (f(x_t) - f(x_{T-k})) \nonumber \\
&& + \sum_{k=1}^{T-1} \frac{1}{k+1} \left[\frac{1}{k}\sum_{t=T-k+1}^{T} \eta_{t} - \eta_{T-k}\right] \left\{f(x_{T-k}) - f(x)\right\} \nonumber.
\end{eqnarray}
Let $x=x_* \in \mcX$. Since $\{\eta_t\}_{t \in \mN}$ is a non-increasing sequence and $f(x_{T-k}) - f_* \geq 0$,
the last term of the above inequality is non-positive. Thus, we derive that
\begin{eqnarray}
&& \eta_T (f(x_{T+1}) - f_*) \leq {1 \over T} \sum_{t=1}^T \eta_t (f(x_t) - f(x_*)) \nonumber \\
&& + \sum_{k=1}^{T-1} \frac{1}{k(k+1)} \sum_{t=T-k+1}^{T} \eta_t (f(x_t) - f(x_{T-k})). \label{ErrorDecompo}
\end{eqnarray}
For every $j\in\{1,\ldots,T\}$, and for every $x\in \dom f$,  summing up (\ref{GFejerSequn}) over $t = j, \cdots,T$,
we get
\be\label{GFejerSequnInterm}
\sum_{t=j}^{T} \eta_t ( f(x_t) - f(x)) \leq \|x_j - x\|^2 + \sum_{t=j}^{T} \xi_t.
\ee
The above inequality with $x= x_*$ and $j=1$ implies
\begin{equation}
\label{e:uno}
{1 \over T} \sum_{t=1}^T \eta_t (f(x_t) - f(x_*)) \leq {1 \over T} \|x_1 - x_*\|^2  + {1 \over T} \sum_{t=1}^{T} \xi_t.
\end{equation}
Inequality \eref{GFejerSequnInterm} with $x= x_{T-k}$ and $j=T-k$ yields
\begin{eqnarray}
 &&\sum_{k=1}^{T-1} \frac{1}{k(k+1)} \sum_{t=T-k+1}^{T} \eta_t (f(x_t) - f(x_{T-k})) \nonumber\\
 &=& \sum_{k=1}^{T-1} \frac{1}{k(k+1)} \sum_{t=T-k}^{T} \eta_t (f(x_t) - f(x_{T-k})) \nonumber\\
  &\leq&  \sum_{k=1}^{T-1} \frac{1}{k(k+1)} \sum_{t=T-k}^{T} \xi_t. \label{e:tre}
\end{eqnarray}
Exchanging the order in the sum, we obtain
\begin{align}
\label{e:due}
\nonumber\sum_{k=1}^{T-1} \frac{1}{k(k+1)} \sum_{t=T-k}^{T} \xi_t& =\sum_{t=1}^{T-1}\sum_{k=T-t}^{T-1} \frac{1}{k(k+1)} \xi_t +\sum_{k=1}^{T-1}\frac{1}{k(k+1)} \xi_T\\
\nonumber&=\sum_{t=1}^{T-1} \left(\frac{1}{T-t}-\frac{1}{T}\right) \xi_t +\left(1-\frac{1}{T}\right)\xi_T\\
&=\sum_{t=1}^{T-1} \frac{1}{T-t} \xi_t +\xi_T-\frac{1}{T}\sum_{t=1}^{T}\xi_t.
\end{align}
The result follows by plugging \eqref{e:uno},\eqref{e:tre}, and \eqref{e:due} into (\ref{ErrorDecompo}).
\end{proof}
In the special case when, for every $t\in\mN$, $\xi_t=0$, we derive the following result.
\begin{corollary}\label{Cor:xi0}
Let $\{ x_t \}_{t \in \mN} \subset \mcH$ be a modified Fej\'{e}r sequence with respect to  $f$
and a sequence $\{(\eta_t, \xi_t)\}_{t \in \mN}$ in $\mR^2_+$. Assume that $\xi_t=0$  for every $t\in \mN$,
and $\{\eta_t\}_{t\in\mN}$ is non-increasing. Let $T\in\mN$, $T>1$. Then
$$
  f(x_{T}) - f_*   \leq  {1 \over \eta_T T } d(x_1, \mcX)^2  .
$$
\end{corollary}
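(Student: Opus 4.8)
The plan is to obtain Corollary~\ref{Cor:xi0} as a direct specialization of Theorem~\ref{Thm:ErrorDecomp}. First I would check that all hypotheses of Theorem~\ref{Thm:ErrorDecomp} are in force: $\{x_t\}_{t\in\mN}$ is modified Fej\'er monotone with respect to $f$ and $\{(\eta_t,\xi_t)\}_{t\in\mN}$ in $\mR_{+}^2$, the sequence $\{\eta_t\}_{t\in\mN}$ is non-increasing, and $T\in\mN$ with $T>1$ --- all assumed in the statement of the corollary. Consequently, inequality \eqref{GeneralBound} applies and gives
\[
\eta_T\bigl(f(x_{T})-f_*\bigr)\le \frac1T d(x_1,\mcX)^2 + \sum_{t=1}^{T-1}\frac{1}{T-t}\,\xi_t + \xi_T .
\]

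Next I would invoke the extra assumption $\xi_t=0$ for every $t\in\mN$. This annihilates every term $\frac{1}{T-t}\xi_t$ in the sum as well as $\xi_T$, so the right-hand side reduces to $\frac1T d(x_1,\mcX)^2$, giving $\eta_T\bigl(f(x_{T})-f_*\bigr)\le \frac1T d(x_1,\mcX)^2$. Dividing through by $\eta_T T$ then yields the claimed bound $f(x_{T})-f_*\le \frac{1}{\eta_T T}d(x_1,\mcX)^2$.

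The only point requiring a word of care --- and it is not a genuine obstacle --- is the division by $\eta_T$, since Theorem~\ref{Thm:ErrorDecomp} does not presume $\eta_T>0$: when $\eta_T=0$ the asserted inequality is read with the usual convention $1/0=+\infty$ and is then vacuously true, while when $\eta_T>0$ the division is legitimate. Alternatively, one could bypass Theorem~\ref{Thm:ErrorDecomp} altogether and re-run its proof with $\xi_t\equiv 0$: the decomposition \eqref{ErrorDecompo} together with \eqref{e:uno} and \eqref{e:tre} collapses directly to $\eta_T(f(x_T)-f_*)\le \frac1T\|x_1-x_*\|^2$ for every $x_*\in\mcX$, and taking the infimum over $x_*\in\mcX$ gives the same conclusion. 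Either way, no new estimate beyond what is already established is needed.
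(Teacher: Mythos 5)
Your proposal is correct and is exactly the derivation the paper intends: the corollary is obtained by specializing the bound \eqref{GeneralBound} of Theorem~\ref{Thm:ErrorDecomp} to $\xi_t=0$ and dividing by $\eta_T$. Your side remark on the degenerate case $\eta_T=0$ is a reasonable extra precaution but not needed beyond what the paper does.
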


The second main  result shows how to derive explicit rates for the objective function values corresponding to
a modified Fej\'{e}r sequence with respect to polynomially decaying sequences $\{(\eta_t, \xi_t) \}_{t \in \mN}$ in $\mR_{+}^2 $.
Interestingly,  the following result (as well as the previous ones) does not require convexity of $f$.

\begin{thm}\label{Thm:GeneralConvergence}
Let $\{ x_t \}_{t \in \mN} \subset C$ be a  modified Fej\'{e}r sequence with respect to a target function $f$
  and $\{(\eta_t, \xi_t)  \}_{t \in \mN}\subset\mR_{+}^2$. Let $\eta\in\left]0,+\infty\right[$, let $\theta_1 \in \left[0,1\right[$,  and set $\eta_t = \eta t^{-\theta_1}$.
 Let $(\theta_2,\xi)\in\mR^2_+$ and suppose that $\xi_t \leq \xi t^{- \theta_2}$ for all $t \in \mN$.
Let $T\in\mN$, $T\geq 3$. Then
\be
f(x_{T}) - f_* \leq {d(x_1, \mcX)^2 \over \eta}  T^{\theta_1-1}  + {\xi c_{\theta_2} \over \eta} (\log T)^{\mathbf{1}_{\{\theta_2 \leq 1\}}} T^{\theta_1 - \min\{\theta_2,1\}}.
\ee
Here,
\begin{equation}
\label{e:defc}
c\colon\mR_+\to \mR_+, \quad \theta_2\mapsto c_{\theta_2}=
\begin{cases}
5+ \dfrac{2 }{ 1 - \theta_2} & \text{if } \theta_2 < 1,\\[1.1ex]
   9                                                            & \text {if } \theta_2 =1, \\[1.1ex]
\dfrac{2^{\theta_2}+3\theta_2-1 }{ \theta_2 - 1} & \text{if } \ \theta_2 > 1.
\end{cases}
\end{equation}
\end{thm}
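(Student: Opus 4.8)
The plan is to apply Theorem~\ref{Thm:ErrorDecomp} directly and then estimate each of the three terms on the right-hand side of \eqref{GeneralBound} under the polynomial decay assumptions $\eta_t=\eta t^{-\theta_1}$ and $\xi_t\le \xi t^{-\theta_2}$. Note first that $\{\eta_t\}_{t\in\mN}$ is non-increasing since $\theta_1\ge 0$, so Theorem~\ref{Thm:ErrorDecomp} applies and gives
\[
f(x_T)-f_* \le \frac{1}{\eta_T}\left(\frac{1}{T}d(x_1,\mcX)^2 + \sum_{t=1}^{T-1}\frac{1}{T-t}\xi_t + \xi_T\right).
\]
Since $1/\eta_T = T^{\theta_1}/\eta$, the first term contributes exactly $\eta^{-1}d(x_1,\mcX)^2\,T^{\theta_1-1}$, which is the first term in the claimed bound. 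It then remains to show that $\eta^{-1}\big(\sum_{t=1}^{T-1}\frac{\xi_t}{T-t}+\xi_T\big)$, multiplied by $T^{\theta_1}$, is bounded by $\eta^{-1}\xi c_{\theta_2}(\log T)^{\mathbf{1}_{\{\theta_2\le 1\}}}T^{\theta_1-\min\{\theta_2,1\}}$; equivalently, that
\[
\sum_{t=1}^{T-1}\frac{\xi_t}{T-t} + \xi_T \le \xi\, c_{\theta_2}\,(\log T)^{\mathbf{1}_{\{\theta_2\le 1\}}}\,T^{-\min\{\theta_2,1\}}.
\]

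The core of the argument is thus a purely numerical estimate of $S_T:=\sum_{t=1}^{T-1}\frac{t^{-\theta_2}}{T-t}$ (after pulling out $\xi$), split into the three regimes of $c_{\theta_2}$. The natural device is to break the sum at the midpoint $t\approx T/2$: for $1\le t\le \lceil T/2\rceil$ one has $T-t\ge T/2$, so that block is $\le \frac{2}{T}\sum_{t\le T/2}t^{-\theta_2}$, and $\sum_{t\le T/2}t^{-\theta_2}$ is $O(T^{1-\theta_2})$ when $\theta_2<1$ (giving $O(T^{-\theta_2})$), $O(\log T)$ when $\theta_2=1$ (giving $O(T^{-1}\log T)$), and $O(1)$ when $\theta_2>1$ (giving $O(T^{-1})$). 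For $\lceil T/2\rceil< t\le T-1$ one has $t^{-\theta_2}\le (T/2)^{-\theta_2}=2^{\theta_2}T^{-\theta_2}$, so that block is $\le 2^{\theta_2}T^{-\theta_2}\sum_{s=1}^{T-\lceil T/2\rceil}\frac{1}{s}\le 2^{\theta_2}T^{-\theta_2}(1+\log T)$. In every regime the dominant exponent is $-\min\{\theta_2,1\}$, with an extra $\log T$ precisely when $\theta_2\le 1$; the term $\xi_T\le \xi T^{-\theta_2}$ is of lower or equal order and gets absorbed. Tracking the constants carefully through these bounds — using $T\ge 3$ to control crude inequalities like $1+\log T\le 2\log T$, and $\sum_{t=1}^{m}t^{-\theta_2}\le 1+\int_1^m s^{-\theta_2}\,ds$ — is what produces the explicit values $5+\frac{2}{1-\theta_2}$, $9$, and $\frac{2^{\theta_2}+3\theta_2-1}{\theta_2-1}$ in \eqref{e:defc}.

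The main obstacle is not conceptual but bookkeeping: one must choose the integral comparisons and the handling of the endpoint $t=T-1$ (where $T-t=1$, so $t^{-\theta_2}=(T-1)^{-\theta_2}$ must be estimated against $T^{-\min\{\theta_2,1\}}$) so that the accumulated constants collapse exactly to the stated $c_{\theta_2}$, and verify the three cases match at $\theta_2=1$ in a limiting sense. A secondary subtlety is that in the regime $\theta_2<1$ one should double-check that $T^{\theta_1-\theta_2}$ rather than $T^{\theta_1-1}$ is indeed the right comparison — i.e. that $\min\{\theta_2,1\}=\theta_2$ there — and that the $(\log T)^{\mathbf 1_{\{\theta_2\le 1\}}}$ factor is genuinely needed (it comes from the second block, the tail $\sum 1/s$, even when $\theta_2<1$). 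With $T\ge 3$ fixed from the outset, all the logarithmic and lower-order corrections can be folded into the constants without further case analysis.
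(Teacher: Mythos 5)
Your proposal is correct and follows essentially the same route as the paper: apply Theorem~\ref{Thm:ErrorDecomp}, divide by $\eta_T=\eta T^{-\theta_1}$, and bound $\sum_{t=1}^{T-1}\frac{t^{-\theta_2}}{T-t}$ by splitting the sum at $T/2$ with integral comparisons, which is precisely the content of the paper's Lemma~\ref{Lem:EstimatingTerm2} (including the absorption of the residual $\log T$ via $T^{1-\theta_2}\log T\leq \tfrac{1}{2(\theta_2-1)}$ when $\theta_2>1$, and of the $\xi_T$ term into $c_{\theta_2}$). The only difference is that you leave the constant bookkeeping implicit, whereas the paper carries it out explicitly to arrive at \eqref{e:defc}.
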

To prove this result, we will use Theorem \ref{Thm:ErrorDecomp} as well as the following lemma.
\begin{lemma}\label{Lem:EstimatingTerm2}
Let  $q \in \mR_+$ and $T\in\mN$, $T\geq 3$. Then
  \bea  \sum_{t=1}^{T-1} {1 \over T-t} t^{-q}
 \leq
 \left\{ \begin{array}
   {ll}
   \left(4+ {2 /( 1-q)}\right) T^{- q} \log T, & \hbox{when} \ q < 1,\\
   {8 T^{-1} \log T},                          & \hbox{when} \ q = 1,\\
   (2^q+2q)/(q-1)T^{-1},       & \hbox{when} \ q >1, \\
 \end{array}\right.
  \eea
\end{lemma}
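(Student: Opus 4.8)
The plan is to split the sum at the midpoint $t\approx T/2$, writing
\[
\sum_{t=1}^{T-1}\frac{t^{-q}}{T-t}=S_1+S_2,\qquad S_1:=\sum_{t=1}^{\lfloor T/2\rfloor}\frac{t^{-q}}{T-t},\qquad S_2:=\sum_{t=\lfloor T/2\rfloor+1}^{T-1}\frac{t^{-q}}{T-t},
\]
and to estimate the two pieces by exploiting that on the first block the factor $(T-t)^{-1}$ is almost constant, while on the second block the factor $t^{-q}$ is. Indeed, for $S_1$ we have $T-t\ge T/2$, so $S_1\le \tfrac2T\sum_{t=1}^{\lfloor T/2\rfloor}t^{-q}$; for $S_2$ we have $t>T/2$, so $t^{-q}<(T/2)^{-q}=2^qT^{-q}$, and after the change of index $s=T-t$,
\[
S_2\le 2^qT^{-q}\sum_{s=1}^{\,T-1-\lfloor T/2\rfloor}\frac1s\le 2^qT^{-q}\bigl(1+\log(T/2)\bigr),
\]
using $\sum_{s=1}^N s^{-1}\le 1+\log N$ and $T-1-\lfloor T/2\rfloor< T/2$.

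Next I would estimate $\sum_{t=1}^{m}t^{-q}$ with $m:=\lfloor T/2\rfloor\le T/2$ by the standard comparison with an integral: for $q<1$, $\sum_{t=1}^{m}t^{-q}\le \tfrac{m^{1-q}}{1-q}\le \tfrac{2^{q-1}}{1-q}T^{1-q}$, hence $S_1\le \tfrac{2^q}{1-q}T^{-q}$; for $q=1$, $\sum_{t=1}^m t^{-1}\le 1+\log m\le 1+\log(T/2)$, hence $S_1\le \tfrac2T\bigl(1+\log(T/2)\bigr)$; and for $q>1$, $\sum_{t=1}^m t^{-q}\le\sum_{t=1}^\infty t^{-q}\le 1+\tfrac1{q-1}=\tfrac{q}{q-1}$, hence $S_1\le \tfrac{2q}{(q-1)T}$. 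Since $T\ge3$ forces $\log T\ge1$, and therefore $1+\log(T/2)\le2\log T$, the case $q=1$ follows by adding $S_1\le\tfrac{4\log T}{T}$ and $S_2\le 4T^{-1}\log T$; and the case $q<1$ follows by adding $S_1\le\tfrac2{1-q}T^{-q}\log T$ and $S_2\le 4T^{-q}\log T$, where the crude bounds $2^q\le2$ and $2^{q+1}\le4$ (valid since $q\le1$) have been used.

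The genuinely delicate case is $q>1$, where the target bound is of order $T^{-1}$ rather than $T^{-q}\log T$, so the logarithmic factor in $S_2\le 2^qT^{-q}\cdot2\log T$ must be absorbed into a power of $T$. This I would do via the elementary inequality $\log T\le \tfrac{T^\alpha}{e\,\alpha}$, valid for every $\alpha>0$ and $T\ge1$: it follows from $e^u\ge e\,u$ — equivalently, from minimizing $u\mapsto e^u/u$ over $u>0$, whose minimum is $e$, attained at $u=1$ — upon substituting $u=\alpha\log T$. Taking $\alpha=q-1$ and using $2<e$ gives $2\log T\le\tfrac{T^{q-1}}{q-1}$, whence $S_2\le 2^qT^{-q}\cdot\tfrac{T^{q-1}}{q-1}=\tfrac{2^q}{q-1}T^{-1}$; combined with $S_1\le\tfrac{2q}{(q-1)T}$ this yields exactly $S_1+S_2\le\tfrac{2^q+2q}{q-1}T^{-1}$. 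The only real work is therefore the bookkeeping of constants — the choice of the split point and of the slightly sharp integral estimate $\sum_{t\le m}t^{-q}\le m^{1-q}/(1-q)$ rather than a looser one — so that the final constants match $4+2/(1-q)$, $8$, and $(2^q+2q)/(q-1)$ precisely.
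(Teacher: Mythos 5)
Your proof is correct and follows essentially the same route as the paper: split the sum at $T/2$, bound the near-$T$ block by $2^qT^{-q}$ times a harmonic sum and the early block by $2T^{-1}$ times a partial sum of $t^{-q}$, compare with integrals, use $\log T\geq 1$ for $T\geq 3$, and for $q>1$ absorb the logarithm via $T^{1-q}\log T\leq 1/(e(q-1))$ (the paper states this as $T^{-q+1}\log T\leq 1/(2(q-1))$). The constants work out to exactly the stated ones, so no changes are needed.
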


\begin{proof}
We  split the sum into two parts
 \bea
 \sum_{t=1}^{T-1} {1 \over T-t} t^{-q} &=& \sum_{T/2 \leq t \leq T -1 }  {1 \over T-t} t^{-q} + \sum_{1\leq t < T/2}  {1 \over T-t} t^{-q} \\
 & \leq & 2^q T^{-q} \sum_{T/2 \leq t \leq T-1}  {1 \over T-t} + 2 T^{-1} \sum_{1\leq t < T/2}   t^{-q} \\
 & = & 2^q T^{-q} \sum_{1 \leq t \leq T/2}   t^{-1} + 2 T^{-1} \sum_{1\leq t < T/2}   t^{-q}.
\eea
Applying, for $T\geq 3$,
 $$ \sum_{t=1}^{T} t^{-\theta_2} \leq 1 + \int_1^T u^{-\theta_2} d u \leq
 \left\{ \begin{array}
   {ll}
   {T^{1 - \theta_2}/(1-\theta_2)}, & \hbox{when} \ \theta_2 < 1,\\
   {2 \log T},                          & \hbox{when} \ \theta_2 = 1,\\
   {\theta_2/(\theta_2 - 1)},       & \hbox{when} \ \theta_2 >1, \\
 \end{array}\right.
 $$
 we get
 $$ \sum_{t=1}^{T-1} {1 \over T-t} t^{-q}  \leq 2^{q+1} T^{-q} \log T +
 \left\{ \begin{array}
   {ll}
   (2/(1-q))T^{ - q}, & \hbox{when} \ q < 1,\\
   {4 T^{-1} \log T},                          & \hbox{when} \ q = 1,\\
   {2q} T^{-1}/(q-1),       & \hbox{when} \ q >1, \\
 \end{array}\right.
 $$
which leads to the desired result by using  $T^{-q+1} \log T  \leq 1/(2(q-1))$ when $q> 1$.
\end{proof}

Now, we are ready to prove Theorem \ref{Thm:GeneralConvergence}.
\begin{proof}[Proof of Theorem \ref{Thm:GeneralConvergence}]
It follows from Theorem \ref{Thm:ErrorDecomp} that \eref{GeneralBound} holds.
Substituting  $\eta_t = \eta t^{- \theta_1} $, $\xi_t \leq \xi t^{-\theta_2}$,
 \bea
  \eta T^{-\theta_1} (f(x_{T}) - f_*) &\leq& {1 \over T} d(x_1, \mcX)^2 + \xi \sum_{t=1}^{T-1}\frac{1}{T-t} t^{-\theta_2}  + \xi T^{-\theta_2}.
 \eea
Lemma \ref{Lem:EstimatingTerm2} yields
 \bea
  \eta T^{-\theta_1} (f(x_{T}) - f_*) \leq {1 \over T}  d(x_1, \mcX)^2 + \xi {c}_{\theta_2} (\log T)^{\mathbf{1}_{\{\theta_2 \leq 1\}}} T^{- \min\{\theta_2,1\}}.
 \eea
  The results follows dividing both sides by $\eta T^{-\theta_1}$.
\end{proof}

\section{Applications in Convex Optimization}
In this section, we apply previous results to some convex optimization algorithms, including forward-backward splitting, projected subgradient, incremental proximal subgradient,  and Douglas-Rachford splitting methods.
Convergence rates for the objective function values are obtained by using Theorem \ref{Thm:GeneralConvergence}.
The key observation is that the sequences generated by these algorithms are modified Fej\'{e}r monotone.

Throughout this section, we assume that $\mcH$ is a Hilbert space, and $f: \mcH \to ]-\infty,\infty]$ is a proper, lower semicontinuous convex function.
Recall that the subdifferential of $f$ at $x\in\mcH$ is
\be\label{Subdifferential} \partial f(x) = \{u\in \mcH:\,(\forall y\in\mcH)\,\, \  f(x) + \la u, y-x\ra \leq f(y) \}. \ee
The elements of the subdifferential of $f$ at $x$ are called subgradients of $f$ at $x$.
More generally, for $\epsilon\in\left]0,+\infty\right[$, the $\epsilon$-subdifferential of $f$ at $x$ is the set $\partial_{\epsilon} f(x)$ defined by
\be
\label{e:esubd}
\partial_{\epsilon} f(x) = \{u\in \mcH: \,(\forall y\in\mcH)\,\, \ f(x) + \la u, y-x\ra - \epsilon \leq f(y) \}.
\ee
The proximity operator of $f$ \cite{Mor62} is
\begin{equation}
\label{e:prox}
\prox_f(x)=\argmin_{y\in\mcH}\left\{f(y)+\frac 1 2 \|y-x\|^2 \right\}.
\end{equation}

\subsection{Forward-Backward Splitting}\label{Subsec:FoBos}
In this subsection, we consider a forward-backward splitting algorithm for solving Problem~\eref{GeneralOptimization}, with objective 
function
\begin{equation}
\label{e:sum}
f = l + r
\end{equation}
where $r\colon\mR\to\left]-\infty,\infty\right]$ and  $l\colon\mcH\to\mR$ are proper, lower semicontinuous, and convex.
Since $l$ is real-valued, we have  $\dom\, \partial l=\mcH$ \cite[Proposition 16.14]{Bauschke2011}. 
\begin{alg}
  \label{AlgFoBos}
  Given $x_{1} \in \mcH$, a sequence of stepsizes $\{\alpha_t \}_{t \in \mN}\subset \left]0,+\infty\right[$,
  and a sequence $\{\epsilon_t\}_{t \in \mN}\subset \left[0,+\infty\right[$
  set, for every $t\in\mN$,
  \be
  \label{FoBos}
     x_{t+1} = \prox_{\alpha_tr}(x_t-\alpha_tg_t)
 \ee
with $g_t\in \partial_{\epsilon_t} l(x_t)$.
\end{alg}

The forward-backward splitting algorithm has been well studied \cite{Tseng1991,Chen1997,Combettes2005,Bredies2008}
and a review of this algorithm can be found in \cite{Combettes2011} under the assumption that $l$ is differentiable
with a Lipschitz continuous gradient.
Convergence is proved using arguments based on Fej\'{e}r monotonicity of the generated sequences \cite{Combettes2009}.
Under the assumption that $l$ is a differentiable function with Lipschitz continuous gradient, the algorithm exhibits a
sublinear convergence rate $O(T^{-1})$ on the objective $f$ \cite{Beck2009}.
If  $l$ is not smooth, the algorithm has been studied first in \cite{Passty1979},
and has a convergence rate $O(T^{-1/2})$, considering the best point selection rule \cite{Singer2009}.
Our objective here is to provide a convergence rate for the algorithm considering the last iteration,  which shares the same rate
(up-to logarithmic factors)  and to allow the use of $\epsilon$-subgradients, instead of subgradients.

\begin{thm}
\label{thm:ratefb}
Let $\alpha\in\left]0,+\infty\right[$,   let $\theta \in \left]0,1\right[$, and let, for every $t\in\mN^*$,  $\alpha_t = \alpha t^{-\theta}$.
Let $\epsilon\in\left]0,+\infty\right[$, $\{\epsilon_t\}_{t\in\mN^*}\subset \left[0,+\infty\right]$,
  and assume that $\epsilon_t\leq \epsilon\alpha_t$.
Let $\{x_t\}_{t\in\mN^*}$ be the sequence generated by Algorithm~\ref{AlgFoBos}.
Assume that there exists $B\in\left]0,+\infty\right[$ such that
\begin{equation}
\label{e:bdsgd} (\forall g \in \partial l_{\epsilon_t}(x_t) \cup \partial r(x_{t})) \quad \|g\| \leq B,
\end{equation}
 and let $c$ be defined as in \eqref{e:defc}.  Let $T\in\mN$, $T>3$. Then
 $$ f(x_{T}) - f_* \leq  {d(x_1, \mcX)^2 \over 2\alpha} T^{\theta-1}  + \alpha(5B^2+\epsilon)  c_{2\theta} (\log T)^{\mathbf{1}_{\{2\theta \leq 1\}}} T^{- \min\{\theta,1 -\theta\}} .$$
\end{thm}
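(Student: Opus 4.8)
The plan is to show that the sequence $\{x_t\}_{t\in\mN^*}$ generated by Algorithm~\ref{AlgFoBos} is modified Fej\'er monotone with respect to $f=l+r$ for a polynomially decaying pair $\{(\eta_t,\xi_t)\}_{t\in\mN^*}$, and then simply to invoke Theorem~\ref{Thm:GeneralConvergence}. I expect to obtain $\eta_t=2\alpha_t=2\alpha t^{-\theta}$, so that in the notation of Theorem~\ref{Thm:GeneralConvergence} one takes $\eta=2\alpha$ and $\theta_1=\theta$, together with $\xi_t$ of the order of $\alpha_t^2$, so that $\theta_2=2\theta$ and $\xi$ is a multiple of $\alpha^2$. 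Granting this, the bound of Theorem~\ref{Thm:GeneralConvergence} already has the required shape, and it only remains to simplify the exponents using the elementary identities $\theta-\min\{2\theta,1\}=-\min\{\theta,1-\theta\}$ and $\mathbf{1}_{\{\theta_2\le1\}}=\mathbf{1}_{\{2\theta\le1\}}$, and to track the numerical constant.

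The core of the argument is the verification of the modified Fej\'er inequality \eqref{GFejerSequn}. Fix $t\in\mN^*$ and $x\in\dom f=\dom r$. Since $x_{t+1}=\prox_{\alpha_t r}(x_t-\alpha_t g_t)$ is the minimizer of the $1$-strongly convex function $y\mapsto\alpha_t r(y)+\frac12\|y-(x_t-\alpha_t g_t)\|^2$, one has, for all $y\in\mcH$,
\[
\alpha_t r(y)+\frac12\|y-x_t+\alpha_t g_t\|^2\ \ge\ \alpha_t r(x_{t+1})+\frac12\|x_{t+1}-x_t+\alpha_t g_t\|^2+\frac12\|y-x_{t+1}\|^2 .
\]
Expanding the squares, cancelling the common term $\frac{\alpha_t^2}{2}\|g_t\|^2$, setting $y=x$, invoking the $\epsilon_t$-subgradient inequality $\langle g_t,x-x_t\rangle\le l(x)-l(x_t)+\epsilon_t$ (valid since $g_t\in\partial_{\epsilon_t}l(x_t)$), and using $2\alpha_t\langle g_t,x_t-x_{t+1}\rangle\le\alpha_t^2\|g_t\|^2+\|x_t-x_{t+1}\|^2$ to absorb the residual $-\|x_{t+1}-x_t\|^2$, I obtain
\[
\|x_{t+1}-x\|^2\ \le\ \|x_t-x\|^2-2\alpha_t\bigl(f(x_t)-f(x)\bigr)+2\alpha_t\bigl(r(x_t)-r(x_{t+1})\bigr)+2\alpha_t\epsilon_t+\alpha_t^2\|g_t\|^2 .
\]
This is precisely \eqref{GFejerSequn} with $\eta_t=2\alpha_t$ and $\xi_t=2\alpha_t\bigl(r(x_t)-r(x_{t+1})\bigr)+2\alpha_t\epsilon_t+\alpha_t^2\|g_t\|^2$; introducing $h_t:=\alpha_t^{-1}(x_t-x_{t+1})-g_t$, which by the optimality condition of the prox belongs to $\partial r(x_{t+1})$, the bound $r(x_t)-r(x_{t+1})\ge\langle h_t,x_t-x_{t+1}\rangle$ shows $\xi_t\ge0$, so $\{(\eta_t,\xi_t)\}_{t\in\mN^*}\subset\mR_+^2$ as Definition~\ref{Def:GFejer} requires.

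The remaining work is to bound $\xi_t$ by a power of $t$, and I expect this to be the only nontrivial estimate. By \eqref{e:bdsgd} one has $\|g_t\|\le B$ and, applied at the iterate $x_{t+1}$, also $\|h_t\|\le B$; hence $\|x_t-x_{t+1}\|=\alpha_t\|g_t+h_t\|\le 2B\alpha_t$, and, taking a subgradient $p_t\in\partial r(x_t)$ (which again has norm at most $B$), convexity of $r$ gives $r(x_t)-r(x_{t+1})\le\langle p_t,x_t-x_{t+1}\rangle\le B\|x_t-x_{t+1}\|\le 2B^2\alpha_t$. Together with $\epsilon_t\le\epsilon\alpha_t$ this yields $\xi_t\le 5B^2\alpha_t^2+2\epsilon\alpha_t^2\le 2(5B^2+\epsilon)\alpha^2\,t^{-2\theta}$. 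The delicate point is exactly the term $r(x_t)-r(x_{t+1})$: the proximal step produces $r$ evaluated at the new iterate $x_{t+1}$, whereas modified Fej\'er monotonicity is expressed through $f(x_t)=l(x_t)+r(x_t)$, so this gap has to be controlled — and controlling it is precisely what makes the boundedness hypothesis \eqref{e:bdsgd} on the subgradients of $r$ at the iterates necessary.

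Finally I would apply Theorem~\ref{Thm:GeneralConvergence} with $\eta=2\alpha$, $\theta_1=\theta\in\left]0,1\right[$, $\xi=2(5B^2+\epsilon)\alpha^2$ and $\theta_2=2\theta$. Since $d(x_1,\mcX)^2/\eta=d(x_1,\mcX)^2/(2\alpha)$, $\xi c_{\theta_2}/\eta=(5B^2+\epsilon)\alpha\,c_{2\theta}$, $T^{\theta_1-1}=T^{\theta-1}$, $\mathbf{1}_{\{\theta_2\le1\}}=\mathbf{1}_{\{2\theta\le1\}}$ and $T^{\theta_1-\min\{\theta_2,1\}}=T^{-\min\{\theta,1-\theta\}}$, the conclusion of Theorem~\ref{Thm:GeneralConvergence} is exactly the asserted inequality, with $c_{2\theta}$ the value at $2\theta$ of the function $c$ defined in \eqref{e:defc}.
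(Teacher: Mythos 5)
Your proposal is correct and follows the paper's overall strategy: establish that $\{x_t\}$ is modified Fej\'er monotone with $\eta_t=2\alpha_t$ and $\xi_t=O(\alpha_t^2)$, then invoke Theorem~\ref{Thm:GeneralConvergence} with $\theta_1=\theta$, $\theta_2=2\theta$. The only real difference is in how you verify the per-iteration inequality: the paper writes the update explicitly as $x_{t+1}=x_t-\alpha_t g_t-\alpha_t q_{t+1}$ with $q_{t+1}\in\partial r(x_{t+1})$ via Fermat's rule, expands $\|x_{t+1}-x\|^2$ and estimates the cross terms $\la x_t-x,g_t\ra$ and $\la x_t-x,q_{t+1}\ra$ by subgradient and Cauchy--Schwarz inequalities, arriving at $\xi_t=(10B^2+2\epsilon)\alpha_t^2$; you instead use the $1$-strong convexity of the prox subproblem (the three-point prox inequality), which handles the $r$-part more economically and leaves only the residual $r(x_t)-r(x_{t+1})$, controlled exactly as in the paper by a bounded subgradient at $x_t$ together with the step bound $\|x_t-x_{t+1}\|\le 2B\alpha_t$ (which, as in the paper, uses the hypothesis \eqref{e:bdsgd} at index $t+1$ for $h_t\in\partial r(x_{t+1})$). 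This bookkeeping yields the slightly sharper $\xi_t\le(5B^2+2\epsilon)\alpha_t^2$, which you then relax to $2(5B^2+\epsilon)\alpha_t^2$ to land on the stated constant; your check that $\xi_t\ge 0$ (needed for Definition~\ref{Def:GFejer}) is valid, though you could avoid it altogether by taking $\xi_t:=(5B^2+2\epsilon)\alpha_t^2$ from the start. Both proofs implicitly assume $\partial r(x_1)\neq\emptyset$, so no gap is introduced beyond what the paper itself assumes.
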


\begin{proof}
Let $t\in\mN^*$. By Fermat's rule (see e.g. \cite[Theorem 16.2]{Bauschke2011}), $$0 \in x_{t+1} - x_{t} + \alpha_t g_t + \alpha_t \partial r(x_{t+1}).$$
Thus, there exists $q_{t+1} \in \partial r(x_{t+1})$, such that  $x_{t+1}$ in \eref{FoBos} can be written as
\be\label{FoBosEquivalent}
x_{t+1} = x_t - \alpha_t g_t-\alpha_t q_{t+1}.\qquad
\ee
Note that $\{x_t\}_{t\in\mN^*} \subset \dom f$ and let $x\in\dom f$.
Using \eref{FoBosEquivalent} and expanding $\|x_{t+1} - x\|^2,$ we get
\be\label{e4}
\|x_{t+1} - x\|^2 = \|x_t - x\|^2 + \alpha_t^2 \|g_t+ q_{t+1}\|^2 - 2\alpha_t \la x_t - x, g_t \ra - 2\alpha_t \la x_t - x, q_{t+1}\ra.
\ee
By \eqref{e:bdsgd},
\be\label{e2}
 \alpha_t^2 \|g_t+ q_{t+1}\|^2 \leq 4 \alpha_t^2 B^2.
  \ee
By \eref{e:esubd},
\be\label{e3}
 \la x_t - x, g_t \ra  \geq l(x_t) - l(x)-\epsilon_t,
 \ee
and convexity of $r$ implies
\bea
\la x_{t} - x,q_{t+1} \ra &=& \la x_{t} - x_{t+1}, q_{t+1} \ra +\la x_{t+1} - x, q_{t+1} \ra \\
&\geq& \la x_{t} - x_{t+1}, q_{t+1} \ra + r(x_{t+1}) - r(x).
\eea
Using \eref{FoBosEquivalent} and then applying Cauchy inequality,
\bea
\la x_{t} - x, q_{t+1} \ra  &\geq& \la x_{t} - x_{t+1}, q_{t+1} \ra + r(x_{t+1}) - r(x)\\
&=& \alpha_t \la g_t, q_{t+1}\ra + \alpha_t \| q_{t+1}\|^2 + r(x_{t+1}) - r(x) \\
&\geq& - \alpha_t \| g_t\| \| q_{t+1}\| + r(x_{t+1}) - r(x)\\
&\geq& -\alpha_t B^2 + r(x_{t+1}) - r(x) \\
&=& -\alpha_t B^2 + [r(x_{t}) - r(x)] + [r(x_{t+1}) - r(x_t)].
\eea
Let $q_t\in\partial r(x_t)$. By convexity,  $ r(x_{t+1}) - r(x_t) \geq \la x_{t+1} - x_t, q_t \ra$.
Moreover, recalling the expression in \eref{FoBosEquivalent}, we get
\begin{align}
\label{es}
\nonumber\la x_{t} - x, q_{t+1} \ra
\nonumber&\geq -\alpha_t B^2 + [r(x_{t}) - r(x)] + \la x_{t+1} - x_t , q_{t} \ra \\
\nonumber&=  -\alpha_t B^2 + [r(x_{t}) - r(x)] - \alpha_t \la g_t + q_{t+1} , q_t\ra \\
\nonumber&\geq  -\alpha_t B^2 + [r(x_{t}) - r(x)] -  \alpha_t(\|g_t\| + \|q_{t+1}\|) \|q_t\| \\
&\geq -\alpha_t B^2 + [r(x_{t}) - r(x)] - 2 \alpha_t B^2.
\end{align}
It follows from \eref{e4}, \eref{e2}, \eref{e3}, and \eqref{es}  that
\bea
\|x_{t+1} - x\|^2 &\leq& \|x_t - x\|^2 - 2\alpha_t [l(x_t) - l(x)] - 2\alpha_t [r(x_{t}) - r(x)]  + 10\alpha_t^2 B^2+2\alpha_t\epsilon_t \\
&=& \|x_t - x\|^2 - 2\alpha_t [f(x_t) - f(x)] +\alpha_t^2 (10B^2+2\epsilon)
\eea
Thus, $\{ x_t \}_{t\in\mN^*}$ is a modified Fej\'{e}r sequence with respect to the target function $f$ and $\{(2\alpha_t, (10  B^2+2\epsilon) \alpha_t^2) \}_{t\in\mN^*}$.
The statement follows from Theorem \ref{Thm:GeneralConvergence}, applied with $\theta_1 = \theta$, $\theta_2 = 2\theta,$ $\eta= 2 \alpha$ and $\xi
= (10 B^2 +2\epsilon)\alpha$.
\end{proof}
The following remark collects some comments on the previous result.
\begin{rem}\
\begin{enumerate}
\item Setting $\theta=1/2$, we get a convergence rate $O(T^{-1/2} \log T)$ for forward-backward algorithm with nonsummable diminishing stepsizes, considering the last iteration.
\item In Theorem \ref{thm:ratefb}, the assumption on bounded approximate subgradients, which is equivalent to Lipschitz continuity of $l$ and $r$,
 is satisfied for some practical optimization problems. For example, when $r$ is the indicator function of a closed, bounded, and convex set  $D\subset \mR^N$, 
it follows that $\{x_t\}_{t\in\mN}$ is bounded, which implies $\{g_t\}_{t\in\mN}$ is bounded as well \cite{Alber1998}.
 For general cases, similar results may be obtained by imposing a growth condition on $\partial f$, using a similar approach to that in \cite{Lin2015} to
bound the sequence of subgradients.
\end{enumerate}
\end{rem}
If the function $l$ in \eqref{e:sum} is differentiable, with a Lipschitz differentiable gradient, we recover the well-known $O(1/T)$ convergence rate for
the objective function values.
\begin{pro}{\cite[Theorem 3.1]{Beck2009}}
\label{prop:fbdiff}
Let $\beta\in\left[0,+\infty\right[$ and assume that $\nabla l$ is $\beta$-Lipschitz continuous. Consider Algorithm~\ref{AlgFoBos} with $\epsilon=0$  and $\alpha_t= \beta$ for all $t\in \mN^*$.
Then, for every $T\in\mN$, $T>1$
\begin{equation}
\label{e:fbdff}
f(x_t)-f_*\leq \frac{\beta d(x_1,\mcX)^2}{2 T}
\end{equation}
\end{pro}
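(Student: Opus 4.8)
The plan is to exploit the smoothness of $l$ to obtain, for the sequence generated by Algorithm~\ref{AlgFoBos} with $\epsilon=0$ (so that $g_t=\nabla l(x_t)$) and constant stepsize $\alpha_t\equiv\alpha$ with $\alpha\beta\le 1$, the strengthened one‑step inequality
\[
(\forall x\in\dom f)\qquad \|x_{t+1}-x\|^2\;\le\;\|x_t-x\|^2-2\alpha\bigl(f(x_{t+1})-f(x)\bigr).
\]
This is sharper than the modified Fej\'er condition \eqref{GFejerSequn}: the objective is evaluated at the \emph{new} iterate $x_{t+1}$ and $\xi_t=0$. It is precisely this improvement that yields the $O(1/T)$ rate without invoking the averaging machinery of Theorem~\ref{Thm:ErrorDecomp}; once the inequality is in hand, the conclusion follows from a plain telescoping argument together with the monotonicity of $\{f(x_t)\}$.

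To establish the displayed inequality I would combine three standard ingredients. First, since $\nabla l$ is $\beta$‑Lipschitz, the descent lemma gives $l(x_{t+1})\le l(x_t)+\la\nabla l(x_t),x_{t+1}-x_t\ra+\tfrac\beta2\|x_{t+1}-x_t\|^2$. Second, convexity of $l$ gives $l(x_t)\le l(x)+\la\nabla l(x_t),x_t-x\ra$. Third, Fermat's rule applied to the proximal step (exactly as in the proof of Theorem~\ref{thm:ratefb}, now with $g_t=\nabla l(x_t)$ and $\alpha_t=\alpha$) yields $\tfrac1\alpha(x_t-x_{t+1})-\nabla l(x_t)\in\partial r(x_{t+1})$, hence $r(x_{t+1})\le r(x)+\la\tfrac1\alpha(x_t-x_{t+1})-\nabla l(x_t),\,x_{t+1}-x\ra$. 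Adding the three bounds for $f(x_{t+1})=l(x_{t+1})+r(x_{t+1})$, the $\nabla l(x_t)$ terms cancel and one is left with
\[
f(x_{t+1})-f(x)\;\le\;\tfrac\beta2\|x_{t+1}-x_t\|^2+\tfrac1\alpha\la x_t-x_{t+1},\,x_{t+1}-x\ra .
\]
Applying the three‑point identity $\la x_t-x_{t+1},x_{t+1}-x\ra=\tfrac12\bigl(\|x_t-x\|^2-\|x_{t+1}-x_t\|^2-\|x_{t+1}-x\|^2\bigr)$ and noting that $\bigl(\tfrac\beta2-\tfrac1{2\alpha}\bigr)\|x_{t+1}-x_t\|^2\le 0$ when $\alpha\beta\le 1$ (it vanishes for $\alpha=1/\beta$), one obtains $f(x_{t+1})-f(x)\le\tfrac1{2\alpha}\bigl(\|x_t-x\|^2-\|x_{t+1}-x\|^2\bigr)$, which is the claimed inequality with the sharp constant at $\alpha=1/\beta$.

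It remains to conclude. Taking $x=x_t$ gives $0\le\|x_{t+1}-x_t\|^2\le-2\alpha\bigl(f(x_{t+1})-f(x_t)\bigr)$, so $\{f(x_t)\}_{t\in\mN^*}$ is non‑increasing. Taking $x=x_*\in\mcX$ and summing the inequality over $t=1,\dots,N$ telescopes the right‑hand side to $\|x_1-x_*\|^2-\|x_{N+1}-x_*\|^2\le\|x_1-x_*\|^2$, while the left‑hand side is $2\alpha\sum_{t=1}^{N}\bigl(f(x_{t+1})-f_*\bigr)\ge 2\alpha N\bigl(f(x_{N+1})-f_*\bigr)$ by monotonicity; dividing and taking the infimum over $x_*\in\mcX$ gives $f(x_{N+1})-f_*\le\|x_1-x_*\|^2/(2\alpha N)=\beta\,d(x_1,\mcX)^2/(2N)$ when $\alpha=1/\beta$, which is \eqref{e:fbdff} after the index bookkeeping consistent with the statement. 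The only genuine subtlety here is this bookkeeping — the $x_{t+1}$ appearing in the one‑step estimate forces a choice of summation range and produces the shift by one in the iterate index — everything else is the classical forward–backward estimate.
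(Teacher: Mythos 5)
Your argument is correct, but it takes a genuinely different route from the paper's. The paper does not re-derive anything: it quotes the one-step estimate from \cite[Equation 3.6]{Beck2009}, observes that this is precisely the modified Fej\'er condition \eqref{GFejerSequn} with $\eta_t=2/\beta$ and $\xi_t=0$, and then invokes Corollary~\ref{Cor:xi0}, so the $O(1/T)$ rate falls out of the same machinery used for every other algorithm in Section~3. You instead prove the one-step estimate from scratch (descent lemma, convexity of $l$, Fermat's rule for the prox step, three-point identity), in the sharper form where the objective is evaluated at the new iterate $x_{t+1}$ with $\xi_t=0$, and then conclude by monotonicity of $\{f(x_t)\}_{t\in\mN^*}$ plus plain telescoping, bypassing Theorem~\ref{Thm:ErrorDecomp} and Corollary~\ref{Cor:xi0} entirely. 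What your route buys: it is self-contained, it makes transparent why no logarithmic factor and no averaging device is needed in the smooth case, and it covers any stepsize with $\alpha\beta\leq 1$, implicitly correcting the statement's $\alpha_t=\beta$ (which should read $\alpha_t=1/\beta$, as both your bound and \eqref{e:fbdff} require). What the paper's route buys: brevity and uniformity with the modified-Fej\'er framework that is the point of the note. One caveat about your closing ``index bookkeeping'': since the initialization is $x_1$, your telescoped sum over $t=1,\dots,T-1$ yields $f(x_T)-f_*\leq \beta\, d(x_1,\mcX)^2/\bigl(2(T-1)\bigr)$ rather than $\beta\, d(x_1,\mcX)^2/(2T)$; the denominator $2T$ appears in the paper only because the cited inequality is written with $f(x_t)$ (the old iterate) on the left, so this $\pm1$ shift is a defect of the statement/citation rather than of your argument, but you should state your bound with $T-1$ rather than claim \eqref{e:fbdff} verbatim.
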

\begin{proof}
It follows from \cite[Equation 3.6]{Beck2009} that
\begin{equation}
\label{e:e36}
(\forall t\in\mN^*)\quad \frac{2}{\beta} (f(x_t)-f_*)\leq \|x_{t+1}-x_*\|^2-\|x_{t}-x_*\|^2.
\end{equation}
Thus, $\{x_t\}_{t\in\mN^*}$ is a modified Fej\'er sequence with respect to the target function $f$
and the sequence $\{(\eta_t,\xi_t)\}_{t\in\mN^*}$ with $(\forall t\in\mN)\,\,\eta_t=2/\beta$ and $\xi_t=0$.
The statement follows from Corollary~\ref{Cor:xi0}.
\end{proof}

\subsection{Projected approximate subgradient method}
\label{sec:projsubgr}
Let $D$ be a convex and closed subset of $\mcH$, and let $\iota_D$ be the indicator function of
$D$. In this subsection, we consider Problem~\eref{GeneralOptimization} with objective function
given by
\begin{equation}
\label{e:sump}
f = l + \iota_D
\end{equation}
where $l\colon\mcH\to\mR$ is  proper, lower semicontinuous, and convex.
It is clear that \eqref{e:sump} is a special case of \eqref{e:sum} corresponding to a
given choice of $r$. The forward-backward algorithm in this case reduces to the following
projected subgradient method (see e.g. \cite{Shor1979,Polyak1987,Boyd2003} and references therein),
which allows to use $\epsilon$-subgradients, see \cite{Alber1998,Combettes2001}.
\begin{alg}
  \label{AlgProSub}
  Given $x_{1} \in \mcH$, a sequence of stepsizes $\{\alpha_t \}_{t \in \mN}\subset \left]0,+\infty\right[$,
  and a sequence $\{\epsilon_t\}_{t \in \mN}\subset \left[0,+\infty\right[,$
  set, for every $t\in\mN$,
  \be
  \label{FoBos}
     x_{t+1} =P_D(x_t-\alpha_tg_t)
 \ee
with $g_t\in \partial_{\epsilon_t} l(x_t)$.
\end{alg}
The algorithm has been studied using different rules for choosing the stepsizes.
Here,  as a corollary of Theorem~\ref{thm:ratefb}, we derive the convergence rate for the objective function values, for a nonsummable
diminishing stepsize.

\begin{thm}\label{Thm:ProjeSubgradient}
For some $\alpha_1>0$, $\epsilon \geq 0$ and $\theta \in [0,1)$, let $\alpha_t = \eta_1 t^{-\theta}$ and $\epsilon_t \leq \epsilon \alpha_t$ for all $t \in \mN^*$.
  Let $\{x_t\}_{t\in\mN}$ be a sequence generated by Algorithm \ref{AlgProSub}. Assume that for all $t \in \mN^*,$ $\|g_t\| \leq B.$ Then, for every $T\in\mN$, $T>3$
  $$ f(x_{T}) - f^* \leq  {d(x_1, \mcX)^2 \over 2\alpha_1}  T^{\theta-1}  + {\alpha_1 ( B^2 + 2 \epsilon) \widetilde{c}_{2\theta}} (\log T)^{\mathbf{1}_{\{2\theta \leq 1\}}} T^{- \min(\theta,1 -\theta)} $$
\end{thm}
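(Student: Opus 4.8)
The plan is to derive the statement of Theorem~\ref{Thm:ProjeSubgradient} as a direct corollary of Theorem~\ref{thm:ratefb}, exploiting the observation (already made in the text preceding Algorithm~\ref{AlgProSub}) that the projected subgradient iteration \eqref{FoBos} is precisely the forward-backward iteration of Algorithm~\ref{AlgFoBos} with the choice $r=\iota_D$, since $\prox_{\alpha_t\iota_D}=P_D$. First I would record this identification explicitly, so that the sequence $\{x_t\}_{t\in\mN}$ generated by Algorithm~\ref{AlgProSub} coincides with the sequence generated by Algorithm~\ref{AlgFoBos} for $f=l+\iota_D$.

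Next I would check the hypotheses of Theorem~\ref{thm:ratefb} in this special case. The stepsize assumption $\alpha_t=\alpha_1 t^{-\theta}$ with $\theta\in[0,1)$ and $\epsilon_t\le\epsilon\alpha_t$ matches verbatim (with $\alpha=\alpha_1$). The only nontrivial point is the uniform bound \eqref{e:bdsgd} on $\partial_{\epsilon_t}l(x_t)\cup\partial r(x_t)$. Here $\partial r(x_t)=\partial\iota_D(x_t)=N_D(x_t)$, the normal cone, which is unbounded, so \eqref{e:bdsgd} as literally stated does not hold, and one cannot invoke Theorem~\ref{thm:ratefb} verbatim. The honest route is therefore to re-run the proof of Theorem~\ref{thm:ratefb} with $r=\iota_D$, using only $\|g_t\|\le B$ and tracking that the subgradients $q_{t+1},q_t\in N_D(\cdot)$ enter the estimates only through inner products that can be controlled without a norm bound on them. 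Concretely: in \eqref{es} the term $r(x_{t+1})-r(x)=\iota_D(x_{t+1})-\iota_D(x)=0$ for $x\in D$, and the step $\la x_{t+1}-x_t,q_t\ra\ge 0$ follows because $x_{t+1}\in D$, $x_t\in D$ and $q_t\in N_D(x_t)$ gives $\la x_{t+1}-x_t,q_t\ra\le 0$; wait --- one must instead use that $q_t$ is a \emph{normal} vector, so in fact convexity of $\iota_D$ gives $\iota_D(x_{t+1})-\iota_D(x_t)\ge\la x_{t+1}-x_t,q_t\ra$, i.e. $0\ge\la x_{t+1}-x_t,q_t\ra$, which is exactly the sign needed. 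This removes the troublesome $\|q_t\|$ terms altogether, and the analogue of \eqref{es} reads $\la x_t-x,q_{t+1}\ra\ge \iota_D(x_t)-\iota_D(x)=0$ directly. Plugging into \eqref{e4} with $\alpha_t^2\|g_t+q_{t+1}\|^2$ handled via $\la x_t-x,q_{t+1}\ra\ge0$ after expanding $\|x_{t+1}-x\|^2=\|P_D(x_t-\alpha_tg_t)-x\|^2\le\|x_t-\alpha_tg_t-x\|^2$ (nonexpansiveness of $P_D$), one obtains
\begin{equation*}
\|x_{t+1}-x\|^2\le\|x_t-x\|^2+\alpha_t^2\|g_t\|^2-2\alpha_t\la x_t-x,g_t\ra\le\|x_t-x\|^2-2\alpha_t(f(x_t)-f(x))+\alpha_t^2(B^2+2\epsilon),
\end{equation*}
using \eqref{e3} with $\epsilon_t\le\epsilon\alpha_t\le\epsilon$ (or keeping $2\alpha_t\epsilon_t\le2\epsilon\alpha_t^2$). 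Thus $\{x_t\}_{t\in\mN^*}$ is modified Fej\'er with respect to $f$ and $\{(2\alpha_t,(B^2+2\epsilon)\alpha_t^2)\}_{t\in\mN^*}$.

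Finally I would apply Theorem~\ref{Thm:GeneralConvergence} with $\theta_1=\theta$, $\theta_2=2\theta$, $\eta=2\alpha_1$, and $\xi=(B^2+2\epsilon)\alpha_1$, which yields
\begin{equation*}
f(x_T)-f_*\le\frac{d(x_1,\mcX)^2}{2\alpha_1}T^{\theta-1}+\frac{(B^2+2\epsilon)\alpha_1\, c_{2\theta}}{2}(\log T)^{\mathbf{1}_{\{2\theta\le1\}}}T^{-\min\{\theta,1-\theta\}},
\end{equation*}
matching the claimed bound upon setting $\widetilde c_{2\theta}=c_{2\theta}/2$ (this clarifies the otherwise-undefined constant $\widetilde c$ in the statement; alternatively one absorbs the factor into the definition). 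The main obstacle, as indicated above, is that one cannot literally cite Theorem~\ref{thm:ratefb} because the normal-cone subgradients violate \eqref{e:bdsgd}; the fix is to redo the short computation using nonexpansiveness of $P_D$ and the sign of $\la x_{t+1}-x_t,q_t\ra$, which is cleaner in the projected case than in the general forward-backward case precisely because $\iota_D$ contributes no finite function-value increments. Everything else is bookkeeping: verifying the constants and the exponent $\min\{\theta,1-\theta\}=\min\{\theta_1,\min\{\theta_2,1\}-\theta_1\}$ when $\theta_2=2\theta$.
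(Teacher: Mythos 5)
Your proposal is correct, and it is in fact more careful than the paper, which offers no written proof and merely asserts the theorem ``as a corollary of Theorem~\ref{thm:ratefb}''. You rightly observe that the corollary cannot be invoked literally: with $r=\iota_D$ the hypothesis \eqref{e:bdsgd} would require a uniform bound on $\partial\iota_D(x_t)=N_D(x_t)$, which fails at boundary points, and a verbatim application would in any case produce the constant $\alpha(5B^2+\epsilon)$ rather than the stated $\alpha_1(B^2+2\epsilon)$. Your direct derivation --- nonexpansiveness of $P_D$, the $\epsilon$-subgradient inequality \eqref{e:esubd}, and $\epsilon_t\leq\epsilon\alpha_t$ --- gives exactly the modified Fej\'er inequality with parameters $\{(2\alpha_t,(B^2+2\epsilon)\alpha_t^2)\}_{t}$ on $\dom f=D$ (where $f=l$), and Theorem~\ref{Thm:GeneralConvergence} then yields the stated bound with $\widetilde c_{2\theta}=c_{2\theta}/2$; this is evidently the computation the authors had in mind, since it reproduces their constant. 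Two small remarks: in the final application the parameter should be $\xi=(B^2+2\epsilon)\alpha_1^2$ (not $\alpha_1$), though your displayed bound is the correct one after dividing by $\eta=2\alpha_1$ (the paper commits the same slip in its proof of Theorem~\ref{thm:ratefb}); and the digression about the sign of $\la x_{t+1}-x_t,q_t\ra$ is moot once you pass to nonexpansiveness, which eliminates the normal-cone vectors entirely. An alternative repair, closer to the ``corollary'' route, is to note that the specific $q_{t+1}\in\partial\iota_D(x_{t+1})$ produced by Fermat's rule satisfies $\|q_{t+1}\|=d(x_t-\alpha_t g_t,D)/\alpha_t\leq\|g_t\|\leq B$ because $x_t\in D$, and that $q_t$ may be taken to be $0\in N_D(x_t)$, so the proof of Theorem~\ref{thm:ratefb} goes through under the weaker hypothesis $\|g_t\|\leq B$; your version is cleaner and gives the sharper constant.
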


Choosing $\theta = 1/2$, we get a convergence rate of order $O(T^{-1/2} \log T)$
for projected approximate subgradient methods with nonsummable diminishing stepsizes,
which is optimal up to a $\log$ factor without any further assumption on $f$  \cite{Darzentas1984,Nesterov2004}.
Since the subgradient method is not a descent method, a common approach keeps track of the
best point found so far, i.e., the one with smallest function value:
$$ (\forall T\in\mN^*)\qquad b_T = \argmin_{1\leq t \leq T} f(x_t).$$
Projected subgradient method with diminishing stepsizes of the form $\{\alpha t^{-\theta}\}_t$, with $\theta\in\left]0,1\right]$,
satisfies $b_T-f_*=O(T^{-1/2})$.
Our result shows that considering the last iterate for projected approximate subgradient method essentially
leads to the same convergence rate, up to a logarithmic factor, as the one corresponding to the best iterate,
even in the cases that the function value may not decrease at each iteration.
To the best of our knowledge, our result is the first of this kind, without any assumption on
strong convexity of $f$, or on a conditioning number with respect to subgradients (as in
\cite{Goffin1977} using stepsizes $\{\gamma_t/ \|g_t\|\}_t$).
Note that, using nonsummable diminishing stepsizes, convergence rate $O(T^{-1/2})$ was shown,
 but only for a subsequens of $\{x_t\}_{t\in\mN^*}$ \cite{Alber1998}.
Finally, let us mention that using properties of quasi-Fej\'{e}r sequences,
convergence properties were proved in \cite{Combettes2001}.

\subsection{Incremental Subgradient Proximal Algorithm}
In this subsection, we consider an incremental  subgradient proximal algorithm \cite{Bertsekas2011a,Nedic2001} for solving \eref{GeneralOptimization}, 
with objective function $f$ given by, for some $m\in\mN^*$,  $$\sum_{i=1}^{m} (l_i+r_i), $$
where for each $i$, both $l_i: \mcH \to \mR$ and $r_i:\mcH\to\left]-\infty,+\infty\right]$ are convex, proper,
and lower semicontinuous.
The algorithm is similar to the proximal subgradient method, the main difference being that
at each iteration, $x_t$ is updated incrementally, through a sequence of $m$ steps.
\begin{alg}
  \label{Alg:ProjeIGD}
Let $t\in\mN^*$. Given $x_{t} \in \mcH$, an iteration of the incremental proximal subgradient algorithm
generates $x_{t+1}$ according to the recursion,
\be\label{IGD1}
x_{t+1} = \psi_t^m,
\ee
where $\psi_t^m$ is obtained at the end of a cycle, namely as the last step of the recursion
\be\label{IGD2}
\psi_t^0 = x_t,\qquad \psi_t^i =  \prox_{\alpha_t r_i} (\psi_t^{i-1} - {\alpha_t} g_t^i), \qquad \forall g_t^i \in \partial l_i (\psi_t^{i-1}), \qquad i=1,\cdots,m
\ee
for a suitable sequence of stepsizes $\{\alpha_t\}_{t \in \mN^*}\subset\left]0,+\infty\right[$.
\end{alg}
Several versions of incremental subgradient proximal algorithms have been studied in \cite{Bertsekas2011a},
where convergence results for various stepsizes rules and both for stochastic of cyclic selection of the components
are given. Concerning the function values,  the  results are stated in terms of the best iteration.
See also \cite{Nedic2001A} for the study of the special case  of incremental subgradient methods
under different stepsizes rules. The paper \cite{Kiwiel2004} provides convergence results using
approximate subgrdients instead of gradients.

In this section, we derive a sublinear convergence rate for the incremental subgradient proximal algorithm in a straightforward
way, relying on the properties of modified Fej\'er sequences assuming a boundedness assumption on the subdifferentials, already
used in \cite{Nedic2001A}.

\begin{thm}
  \label{Thm:ProjeIGD}
   Let $\alpha\in\left]0,+\infty\right[$, let $\theta \in \left]0,1\right[$, and let, for every $t\in\mN^*$, $\alpha_t = \alpha t^{-\theta}$.
  Let $\{x_t\}_{t\in\mN^*}$ be the sequence generated by Algorithm~\ref{Alg:ProjeIGD}.
 Let $B\in\left]0,+\infty\right[$ be such that
\[
(\forall t\in\mN^*)(\forall g\in \partial l_i(x_t) \cup \partial r_i(x_t)) \qquad \|g\|\leq B,
\]
and let $c$ be defined as in \eqref{e:defc}.
  Then,   for every $T\in\mN^*$,
 $$ f(x_{T}) - f_* \leq  {d(x_1, \mcX)^2 \over 2\alpha}  T^{\theta-1}  + \frac{\alpha(4m+{5}) mB^2}{2}  {c}_{2\theta} (\log T)^{\mathbf{1}_{\{2\theta \leq 1\}}} T^{- \min\{\theta,1 -\theta\}} .$$
\end{thm}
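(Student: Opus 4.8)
The plan is to show that the sequence $\{x_t\}_{t\in\mN^*}$ generated by Algorithm~\ref{Alg:ProjeIGD} is a modified Fej\'er sequence with respect to the target function $f=\sum_{i=1}^m(l_i+r_i)$ and a suitable sequence $\{(\eta_t,\xi_t)\}_{t\in\mN^*}$, and then invoke Theorem~\ref{Thm:GeneralConvergence}. Concretely, I expect to obtain $\eta_t=2\alpha_t$ and $\xi_t = C m^2 B^2 \alpha_t^2$ for an absolute constant $C$ of size $\approx 4m+5$, so that applying Theorem~\ref{Thm:GeneralConvergence} with $\theta_1=\theta$, $\theta_2=2\theta$, $\eta=2\alpha$, and $\xi=(4m+5)mB^2\alpha/2 \cdot$(appropriate factor) yields the claimed bound. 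The real work is entirely in establishing the modified Fej\'er inequality; the passage from there to the stated rate is an immediate citation.

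To establish the inequality, fix $t\in\mN^*$ and $x\in\dom f$, and track one full cycle $\psi_t^0=x_t,\psi_t^1,\dots,\psi_t^m=x_{t+1}$. Exactly as in the proof of Theorem~\ref{thm:ratefb}, Fermat's rule applied to each proximal step $\psi_t^i=\prox_{\alpha_t r_i}(\psi_t^{i-1}-\alpha_t g_t^i)$ gives a subgradient $q_t^i\in\partial r_i(\psi_t^i)$ with $\psi_t^i=\psi_t^{i-1}-\alpha_t g_t^i-\alpha_t q_t^i$. Expanding $\|\psi_t^i - x\|^2$ in terms of $\|\psi_t^{i-1}-x\|^2$, I get a cross term $-2\alpha_t\la \psi_t^{i-1}-x, g_t^i+q_t^i\ra$ plus the square term $\alpha_t^2\|g_t^i+q_t^i\|^2\le 4\alpha_t^2 B^2$. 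For the cross term I use the subgradient inequality for $l_i$ at $\psi_t^{i-1}$ and for $r_i$ at $\psi_t^i$; the latter produces $r_i(\psi_t^i)-r_i(x)$ but evaluated at the wrong point $\psi_t^i$ rather than $x_t$, so — as in Theorem~\ref{thm:ratefb} — I rewrite $r_i(\psi_t^i)-r_i(x) = [r_i(x_t)-r_i(x)] + [r_i(\psi_t^i)-r_i(x_t)]$ and similarly replace $l_i(\psi_t^{i-1})$ by $l_i(x_t)$ up to an error. All these "point-mismatch" errors are controlled using $\|\psi_t^i-x_t\|\le \sum_{j\le i}\alpha_t\|g_t^j+q_t^j\|\le 2iB\alpha_t$ together with the Lipschitz bound $B$ on subgradients of $l_i$ and $r_i$; each mismatch contributes $O(iB^2\alpha_t^2)$, and summing $i$ from $1$ to $m$ gives a total error $O(m^2 B^2\alpha_t^2)$.

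Summing the telescoping bound $\|\psi_t^i-x\|^2 \le \|\psi_t^{i-1}-x\|^2 - 2\alpha_t[l_i(x_t)+r_i(x_t)-l_i(x)-r_i(x)] + (\text{error}_i)$ over $i=1,\dots,m$, and using $\psi_t^0=x_t$, $\psi_t^m=x_{t+1}$ and $\sum_i(l_i+r_i)=f$, yields
\[
\|x_{t+1}-x\|^2 \le \|x_t-x\|^2 - 2\alpha_t(f(x_t)-f(x)) + c\, m^2 B^2 \alpha_t^2
\]
for an explicit constant $c$; matching it to the target constant $(4m+5)mB^2\alpha^2$ fixes the bookkeeping. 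The main obstacle is precisely this bookkeeping of the mismatch terms: one must be careful that the subgradients $g_t^i$ are taken at the intermediate points $\psi_t^{i-1}$ (not at $x_t$), and that the proximal subgradients $q_t^i$ live at $\psi_t^i$, so converting every function evaluation back to $x_t$ requires repeated use of convexity and the norm bound $\|\psi_t^i-x_t\|\le 2iB\alpha_t$, and the constants must be tracked honestly to land on $4m+5$. Once the modified Fej\'er property is in hand with $\eta_t=2\alpha t^{-\theta}$ and $\xi_t\le \tfrac{(4m+5)mB^2}{}\alpha^2 t^{-2\theta}$ (up to the factor absorbed into $\xi$), Theorem~\ref{Thm:GeneralConvergence} delivers the stated estimate directly, with $c_{\theta_2}=c_{2\theta}$ and the $\log$ factor appearing exactly when $2\theta\le 1$.
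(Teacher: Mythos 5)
Your overall skeleton is exactly the paper's: establish that $\{x_t\}_{t\in\mN^*}$ is modified Fej\'er with $\eta_t=2\alpha_t$ and $\xi_t=(4m+5)mB^2\alpha_t^2$, then apply Theorem~\ref{Thm:GeneralConvergence} with $\theta_1=\theta$, $\theta_2=2\theta$, $\eta=2\alpha$, $\xi=\alpha^2(4m+5)mB^2$ (note the correct $\xi$ is $\alpha^2(4m+5)mB^2$, not the vaguely stated ``$(4m+5)mB^2\alpha/2\cdot$(appropriate factor)''; the $/2$ only appears after dividing by $\eta=2\alpha$). The difference is that the paper does not prove the key per-iteration inequality at all: it cites \cite[Proposition 3, Eq.\ 27]{Bertsekas2011a} verbatim, whereas you propose to rederive it by running the forward-backward argument through one cycle $\psi_t^0,\dots,\psi_t^m$. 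That is a legitimate and more self-contained route, and your outline (Fermat's rule at each prox step, subgradient inequalities for $l_i$ at $\psi_t^{i-1}$ and $r_i$ at $\psi_t^i$, conversion of all function values back to $x_t$ via $\|\psi_t^i-x_t\|\le 2iB\alpha_t$, telescoping over $i$) is the standard way to prove Bertsekas' estimate.

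Two concrete gaps remain, however. First, the constant: you never actually carry out the bookkeeping, and ``matching it to the target constant fixes the bookkeeping'' is circular --- the constant must come out of the estimates, not be matched to the statement. A straightforward execution of exactly the bounds you list (square term $\le 4\alpha_t^2B^2$, inner-product term $\ge -\alpha_tB^2$, mismatch terms $2(i-1)\alpha_tB^2$ and $2i\alpha_tB^2$) gives a per-cycle error of $\alpha_t^2B^2\sum_{i=1}^m(8i+2)=\alpha_t^2B^2\,m(4m+6)$, slightly larger than $(4m+5)mB^2\alpha_t^2$; this is harmless for the rate but does not literally reproduce the stated bound, so either sharpen the estimates (as Bertsekas does) or accept a marginally larger constant in the conclusion. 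Second, your argument uses $\|g_t^i\|\le B$ for $g_t^i\in\partial l_i(\psi_t^{i-1})$ and $\|q_t^i\|\le B$ for $q_t^i\in\partial r_i(\psi_t^i)$, i.e.\ subgradient bounds at the intermediate points $\psi_t^i$, while the theorem's hypothesis as you (and the paper) state it only bounds subgradients at the points $x_t$; if you rederive the inequality rather than cite it, you must either strengthen the hypothesis accordingly or argue explicitly that the bound propagates to the $\psi_t^i$, since otherwise the steps $\|\psi_t^i-x_t\|\le 2iB\alpha_t$ and the Cauchy--Schwarz estimates are not justified.
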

\begin{proof}
  It was shown in \cite[Proposition 3 (Equation 27)]{Bertsekas2011a} that,
  $$   \|x_{t+1} - x\|^2 \leq \|x_t - x\|^2 - 2\alpha_t [f(x_{t}) - f(x)] + \alpha_t^2\left(4m+{5}\right) m B^2. $$
  Thus, $\{ x_t \}_{t\in\mN^*}$ is a modified Fej\'{e}r sequence with respect to the target function $f$, and $\left\{\left(2\alpha_t, \alpha_t^2\left(4m+{5}\right) mB^2\right) \right\}_{t\in\mN^*}$.
  The proof is concluded by applying Theorem \ref{Thm:GeneralConvergence} with $\theta_1 = \theta,\theta_2 = 2\theta$, $\eta = 2\alpha$ and $\xi = \alpha^2\left(4m+{5}\right) mB^2$.
\end{proof}
An immediate consequence of Theorem~\ref{Thm:ProjeIGD}, is that the choice $\theta = 1/2$ yields a convergence rate of order $O(T^{-1/2}\log T)$.

As a corollary of Theorem \ref{Thm:ProjeIGD}, we derive convergence rates for the projected incremental subgradient method. Analogously to what
we have done for the forward-backward algorithm in Section~\ref{Subsec:FoBos}, Theorem~\ref{Thm:ProjeIGD} can be extended to analyze convergence of
the  approximate and incremental subgradient method in \cite{Kiwiel2004}.
\subsection{Douglas-Rachford splitting method}
In this subsection, we consider Douglas-Rachford splitting algorithm for solving \eref{GeneralOptimization}.
Given $l\colon\mcH\to\mR$ and $r\colon\mcH\to\mR$ proper, convex, and
lower semincontinuous functions, we assume that $f=l+r$ in \eqref{GeneralOptimization}.

\begin{alg}
  \label{AlgDoRas}
Let $\{\alpha_t\}_{t\in\mN^*}\subset\left]0,+\infty\right[$.
 Let $t\in\mN^*$. Given $x_{t} \in \mcH,$ an iteration of Douglas-Rachford algorithm generates $x_{t+1}$ according to
  \be\label{DoRas}
  \left\{ \begin{array}
    {ll}
     y_{t + 1} = \prox_{\alpha_t l} (x_t)\\
     z_{t + 1} = \prox_{\alpha_t r}(2y_{t+1} - x_t), \\
     x_{t+1} = x_t + z_{t+1} - y_{t+1}.
  \end{array}
  \right.
 \ee
\end{alg}
The algorithm has been  introduced in \cite{Douglas1956} to minimize the sum of two convex functions,
and then has been extended to monotone inclusions involving the sum of two nonlinear operators \cite{Lions1979}.
A review of this algorithm can be found in \cite{Combettes2011}.
The convergence of the iterates is established  using the theory of Fej\'{e}r sequences \cite{Combettes2009}.
Our objective here is to establish a new result, namely a convergence rate for the objective function values.

\begin{thm}
  \label{thm:FoBos}
Let $\alpha\in\left]0,+\infty\right[$, and let $\theta\in\left]0,1\right[$.  For every $t\in\mN^*$, let $\alpha_t = \alpha t^{-\theta}$.
Let $\{(y_t,x_t,z_t\}_{t\in\mN^*}$ be the sequences generated by Algorithm~\ref{AlgDoRas}. Assume that there exists $B\in\left]0,+\infty\right[$
such that
$$ (\forall t\in\mN^*)(\forall g \in \partial l(y_t) \cup \partial r(z_{t}) \cup \partial l(x_t) \cup \partial r(x_{t})) \quad \|g\| \leq B.$$
Let $c$ be the function defined in \eqref{e:defc}. Then, for every $T\in\mN$, $T>3$,
 $$ f(x_{T}) - f_* \leq  {d(x_1, \mcX)^2 \over 2\alpha} T^{\theta-1}  + {8 \alpha B^2  {c}_{2\theta}} (\log T)^{\mathbf{1}_{\{2\theta \leq 1\}}} T^{- \min\{\theta,1 -\theta\}} .$$
\end{thm}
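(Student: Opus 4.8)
The plan is to prove that the sequence $\{x_t\}_{t\in\mN^*}$ generated by \eref{DoRas} is a modified Fej\'er sequence with respect to $f=l+r$ and the sequence $\{(2\alpha_t,\,16B^2\alpha_t^2)\}_{t\in\mN^*}$, and then to apply Theorem~\ref{Thm:GeneralConvergence} with $\theta_1=\theta$, $\theta_2=2\theta$, $\eta=2\alpha$ and $\xi=16B^2\alpha^2$. Since $l$ and $r$ are real-valued, $\dom f=\mcH$, so it suffices to verify the defining inequality \eqref{GFejerSequn} for every $x\in\mcH$.

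First I would read off subgradients from the two proximal steps by Fermat's rule. The step $y_{t+1}=\prox_{\alpha_t l}(x_t)$ gives $a_{t+1}:=\alpha_t^{-1}(x_t-y_{t+1})\in\partial l(y_{t+1})$, and $z_{t+1}=\prox_{\alpha_t r}(2y_{t+1}-x_t)$ gives $b_{t+1}:=\alpha_t^{-1}(2y_{t+1}-x_t-z_{t+1})\in\partial r(z_{t+1})$. Using the update rule $z_{t+1}=x_{t+1}-x_t+y_{t+1}$, these collapse to the convenient identities $x_t-y_{t+1}=\alpha_t a_{t+1}$, $y_{t+1}-x_{t+1}=\alpha_t b_{t+1}$, hence $x_t-x_{t+1}=\alpha_t(a_{t+1}+b_{t+1})$ and $x_t-z_{t+1}=\alpha_t(2a_{t+1}+b_{t+1})$; moreover $\|a_{t+1}\|\le B$ and $\|b_{t+1}\|\le B$ by the boundedness hypothesis.

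Next, for an arbitrary $x\in\mcH$ I would expand
\[
\|x_{t+1}-x\|^2=\|x_t-x\|^2+\|x_{t+1}-x_t\|^2-2\alpha_t\la a_{t+1}+b_{t+1},x_t-x\ra,
\]
bounding the middle term by $\|x_{t+1}-x_t\|^2=\alpha_t^2\|a_{t+1}+b_{t+1}\|^2\le 4B^2\alpha_t^2$. For the inner product I would split $x_t-x=(x_t-y_{t+1})+(y_{t+1}-x)$ when pairing with $a_{t+1}$ and $x_t-x=(x_t-z_{t+1})+(z_{t+1}-x)$ when pairing with $b_{t+1}$: the subgradient inequalities yield $\la a_{t+1},y_{t+1}-x\ra\ge l(y_{t+1})-l(x)$ and $\la b_{t+1},z_{t+1}-x\ra\ge r(z_{t+1})-r(x)$, while $\la a_{t+1},x_t-y_{t+1}\ra=\alpha_t\|a_{t+1}\|^2\ge 0$ and $\la b_{t+1},x_t-z_{t+1}\ra=\alpha_t\la b_{t+1},2a_{t+1}+b_{t+1}\ra\ge -2B^2\alpha_t$. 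Since the hypothesis also bounds $\partial l(x_t)$ and $\partial r(x_t)$ by $B$, convexity gives the Lipschitz-type estimates $l(y_{t+1})\ge l(x_t)-B\|y_{t+1}-x_t\|\ge l(x_t)-B^2\alpha_t$ and, using $\|z_{t+1}-x_t\|\le 3B\alpha_t$, $r(z_{t+1})\ge r(x_t)-B\|z_{t+1}-x_t\|\ge r(x_t)-3B^2\alpha_t$. Combining all of this and writing $f=l+r$ produces, for every $x\in\mcH$,
\[
\|x_{t+1}-x\|^2\le\|x_t-x\|^2-2\alpha_t(f(x_t)-f(x))+16B^2\alpha_t^2,
\]
which is exactly the modified Fej\'er property with the announced sequences; Theorem~\ref{Thm:GeneralConvergence} then delivers the stated convergence rate. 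The main obstacle is this last accounting step: one has to channel each of the three discrepancies between the function values at $y_{t+1},z_{t+1}$ and at $x_t$ through the appropriate subgradient inequality and keep the numerical constants aligned so that the coefficient of $\alpha_t^2$ comes out to exactly $16B^2$ — which is precisely why the theorem assumes boundedness of $\partial l(x_t)$ and $\partial r(x_t)$ in addition to that of $\partial l(y_t)$ and $\partial r(z_t)$.
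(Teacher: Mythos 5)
Your proposal is correct and follows essentially the same route as the paper's proof: the same Fermat-rule subgradients $v=(x_t-y_{t+1})/\alpha_t$ and $w=(2y_{t+1}-x_t-z_{t+1})/\alpha_t$, the same splitting of the inner products through $y_{t+1}$ and $z_{t+1}$ with the bounded subgradients at $x_t$ closing the gap, the same constant $16B^2\alpha_t^2$, and the same invocation of Theorem~\ref{Thm:GeneralConvergence} with $\theta_1=\theta$, $\theta_2=2\theta$, $\eta=2\alpha$, $\xi=16B^2\alpha^2$. No gaps to report.
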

\begin{proof}
Let $t\in\mN^*$, set $v=(x_t-y_{t+1})/\alpha_t$ and $w=(2y_{t+1}-x_t-z_{t+1})/\alpha_t$.
By Fermat's rule,
\be\label{esuf}
v\in\partial l(y_{t+1}) \ \mbox{ and } w\in\partial r(z_{t+1}).
\ee
We can rewrite \eref{DoRas} as
\be\label{DoRas1}
  \left\{ \begin{array}
    {ll}
     y_{t + 1} = x_t - \alpha_t v, \\
     z_{t + 1} = (2y_{t+1} - x_t) - \alpha_t w, \\
     x_{t+1} = x_t + z_{t+1} - y_{t+1},
  \end{array}
  \right.
\ee
Thus
\be\label{DoRasEquivalent}
x_{t+1} = x_t - \alpha_t(v+w).
\ee
Using \eref{DoRasEquivalent} and expanding $\|x_{t+1} - x\|^2,$ we get
\be\label{e1}
\|x_{t+1} - x\|^2 =  \|x_{t} - x\|^2 + \alpha_t^2 \|v+w\|^2+ 2\alpha_t \la x-x_t, v \ra + 2\alpha_t \la x-x_t, w \ra.
\ee
Let $u\in\partial l(x_t)$. It follows from \eref{esuf} \eref{Subdifferential} and \eref{DoRas1} that
\bea
\la x-x_t,v \ra
&&= \la x-y_{t+1}, v \ra + \la y_{t+1} - x_t, v \ra\\
&&\leq  l(x) - l(y_{t+1}) - \alpha_t \|v\|^2 \\
&& \leq l(x) - l(x_t) + l(x_t) - l(y_{t+1}) \\
&&\leq l(x) - l(x_t) + \la x_t - y_{t+1}, u \ra \\
&&= l(x) - l(x_t) + \alpha_t \la v, u \ra \\
&& \leq l(x) - l(x_t) + \alpha_t B^2.
\eea
Similarly, Let $s\in\partial r(x_t)$. We  bound $\la x-x_t, w\ra$ as follows
\bea
\la x-x_t, w \ra
&&= \la x-z_{t+1}, w \ra + \la z_{t+1} - x_t, w\ra \\
&&\leq r(x) - r(z_{t+1}) - \alpha_t  \la 2v+ w, w\ra \\
&& \leq r(x) - r(x_t) + r(x_t) - r(z_{t+1}) + 2 \alpha_t B^2  \\
&& \leq r(x) - r(x_t) + \la x_t - z_{t+1}, s \ra + 2 \alpha_t B^2 \\
&& = r(x) - r(x_t) +  \alpha_t  \la 2v +w,s \ra +  2 \alpha_t B^2\\
&&\leq r(x) - r(x_t) + 5 \alpha_t B^2.
\eea
Introducing the above two estimates into (\ref{e1}), we get
\bea
\begin{split}
\|x_{t+1} - x\|^2 \leq & \|x_{t} - x\|^2 + 16 \alpha_t^2 B^2
 &+ 2\alpha_t (f(x) - f(x_t)) .
 \end{split}
\eea
Thus, $\{ x_t \}_{t\in\mN^*}$ is a Super Quasi-Fej\'{e}r sequence with respect to  the target function $f$ and $\{(2\alpha_t, 16 \alpha_t^2 B^2 ) \}_{t\in\mN^*}$.
The statement follows from Theorem \ref{Thm:GeneralConvergence} with $\theta_1 = \theta$ and $\theta_2 = 2\theta.$
\end{proof}
Again, choosing $\theta=1/2$, we get a convergence rate $O(T^{-1/2} \log T)$ for the algorithm with nonsummable diminishing stepsizes.
Nonergodic convergence rates for the objective function values corresponding to the Douglas-Rachford iteration can be derived by \cite[Corollary 3.5]{Davis2014},
under the additional assumption that $l$ is the indicator function of a linear subspace of $\mcH$.

\bibliographystyle{plain}

\begin{thebibliography}{100}
\bibitem{Alber1998} Alber, Y. I., Iusem, A. N., \& Solodov, M. V. (1998). On the projected subgradient method for nonsmooth convex optimization in a Hilbert space. Mathematical Programming, 81(1), 23-35.
\bibitem{Bauschke2011}Bauschke, H. H., \& Combettes, P. L. (2011). Convex analysis and monotone operator theory in Hilbert spaces. Springer Science \& Business Media.
\bibitem{Beck2009} Beck, A., \& Teboulle, M. (2009). A fast iterative shrinkage-thresholding algorithm for linear inverse problems. SIAM Journal on Imaging Sciences, 2(1), 183-202.
\bibitem{Bertsekas2011a} Bertsekas, D. P. (2011). Incremental proximal methods for large scale convex optimization. Math. Program., Ser. B 129, 163-195.
\bibitem{Boyd2003}Boyd, S., Xiao, L., \& Mutapcic, A. (2003). Subgradient methods. lecture notes of EE392o, Stanford University, Autumn Quarter, 2004, 2004-2005.
\bibitem{Bredies2008}Bredies, K., \& Lorenz, D. A. (2008). Linear convergence of iterative soft-thresholding. Journal of Fourier Analysis and Applications, 14(5-6), 813-837.
\bibitem{Chen1997}Chen, G. H., \& Rockafellar, R. T. (1997). Convergence rates in forward--backward splitting. SIAM Journal on Optimization, 7(2), 421-444.
\bibitem{Combettes2001} Combettes, P. L. (2001). Quasi-Fej\'{e}rian analysis of some optimization algorithms. Studies in Computational Mathematics 8, 115-152.
\bibitem{Combettes2005} Combettes, P.L., \& Wajs, V.R.(2005). Signal recovery by proximal forward-backward splitting.
Multiscale Model. Simul. {4}(4), 1168-1200.
\bibitem{Combettes2009} Combettes, P. L. (2009). Fej\'{e}r monotonicity in convex optimization. In Encyclopedia of Optimization (pp. 1016-1024). Springer US.
\bibitem{Combettes2011} Combettes, P. L., \& Pesquet, J. C. (2011). Proximal splitting methods in signal processing. In Fixed-point algorithms for inverse problems in science and engineering (pp. 185-212). Springer New York.
\bibitem{Darzentas1984} Darzentas, J. (1984). Problem complexity and method efficiency in optimization. Journal of the Operational Research Society, 35(5), 455-455.
\bibitem{Davis2014} Davis, D. (2014). Convergence rate analysis of the forward-Douglas-Rachford splitting scheme,  SIAM Journal on Optimization (to appear).
\bibitem{Douglas1956} Douglas, J., \& Rachford, H. H. (1956). On the numerical solution of heat conduction problems in two and three space variables. Transactions of the American mathematical Society, 421-439.
\bibitem{Ermoliev1968} Ermol'ev, Yu. M. and Tuniev, A. D. (1968) Random Fej\'er and quasi-Fej\'er sequences, Theory of Optimal Solutions -- Akademiya Nauk Ukrainsko$\breve{\rm{\i}}$ SSR Kiev 2 (1968) 76--83; translated in: American Mathematical Society Selected Translations in Mathematical Statistics and Probability 13 (1973) 143--148.
\bibitem{Goffin1977} Goffin, J. L. (1977). On convergence rates of subgradient optimization methods. Mathematical Programming, 13(1), 329-347.
\bibitem{Kiwiel2004} Kiwiel, K. C. (2004). Convergence of approximate and incremental subgradient methods for convex optimization. SIAM Journal on Optimization, 14(3), 807-840.
\bibitem{Lin2015} Lin J., Rosasco L., \& Zhou D. X. (2015). Iterative regularization for learning with convex loss functions. Arxiv.
\bibitem{Lions1979}Lions, P. L., \& Mercier, B. (1979). Splitting algorithms for the sum of two nonlinear operators. SIAM Journal on Numerical Analysis, 16(6), 964-979.
\bibitem{Mor62} Moreau, J.J. (1962). Fonctions convexes duales et points proximaux dans un espace hilbertien, C. R. Acad. Sci. Paris 255, 2897--2899.
\bibitem{Nedic2001}Nedic, A., \& Bertsekas, D. P. (2001). Incremental subgradient methods for nondifferentiable optimization. SIAM Journal on Optimization, 12(1), 109-138.
\bibitem{Nedic2001A}Nedic, A., \& Bertsekas, D. (2001). Convergence rate of incremental subgradient algorithms. In Stochastic optimization: algorithms and applications (pp. 223-264). Springer US.
\bibitem{Nesterov2004} Nesterov, Y. (2004). Introductory lectures on convex optimization (Vol. 87). Springer Science \& Business Media.
\bibitem{Passty1979} Passty, G.~ B. (1979). Ergodic convergence to a zero of the sum of monotone operators in {H}ilbert space. Journal of Mathematical Analysis and Applications,  {72}(2), 383--390.
\bibitem{Polyak1987} Polyak, B. T. (1987). Introduction to optimization. New York: Optimization Software.
\bibitem{Shor1979} Shor, N. Z. (1979). Minimization Methods for Non-Differentiable Functions. Springer.
\bibitem{Singer2009} Singer, Y., \& Duchi, J. C. (2009). Efficient learning using forward-backward splitting. In Advances in Neural Information Processing Systems (pp. 495-503).
\bibitem{Tseng1991}Tseng, P. (1991). Applications of a splitting algorithm to decomposition in convex programming and variational inequalities. SIAM Journal on Control and Optimization, 29(1), 119-138.
\end{thebibliography}

\end{document}